%This file is formatted by AMS-LATEX.

%File name = dmV4rev.tex

\documentclass[a4paper, 11pt]{article}

\usepackage{amsmath}
\usepackage{amsfonts}
\usepackage{graphics}
\usepackage{epsfig}
\usepackage{amssymb}
\usepackage{amsthm}
\usepackage{amscd}
\usepackage[all]{xy}
\usepackage{latexsym}
\usepackage{geometry}
\usepackage{multirow}
\usepackage{geometry}
\usepackage{color}
\usepackage{CJKutf8} 
\usepackage{bm}
\usepackage{mathrsfs} 
\usepackage{titlesec}

\usepackage{ulem}

\geometry{left=25mm,right=25mm,top=30mm,bottom=30mm}

\newtheorem{thm}{Theorem}[section]
% [section]

\newtheorem{prop}[thm]{Proposition}

\theoremstyle{definition}

\newtheorem{rem}[thm]{Remark}

\numberwithin{equation}{section}
\numberwithin{figure}{section}
\numberwithin{table}{section}

%Definitions

\def\rchi{{\hbox{\raise1.5pt\hbox{$\chi$}}}}

%NewDefinitions

\newcommand{\bea}{\begin{eqnarray}}
\newcommand{\eea}{\end{eqnarray}}
\newcommand{\be}{\begin{equation}}
\newcommand{\ee}{\end{equation}}

\newcommand{\PIV}{P_{\rm IV}}

%Dimensions
%\textwidth = 6in
%\oddsidemargin = 0.25in
%\evensidemargin=0.25in
%\textheight = 8.7in
%\topmargin = -0.2in

\title{\bf  Note on the Singularity Reduction 
of Isomonodromy Systems  Associated with  Garnier Systems}

\author{\Large Kohei Iwaki\footnote{Graduate School of Mathematical Science, 
The University of Tokyo.} \qquad 
Seiya Kato\footnote{College of Arts and Sciences,
The University of Tokyo.} \qquad 
Shotaro Sakurai\footnote{College of Arts and Sciences,
The University of Tokyo.}}

\begin{document}
\large
\setcounter{section}{0}

\maketitle

\begin{abstract}
In this paper, we study the isomonodromy systems associated with 
the Garnier systems of type 9/2 and type 5/2+3/2.  
We show that the both of isomonodromy systems admit 
the singularity reduction (restriction to a movable pole), 
and the resulting linear differential equations 
are isomonodromic with respect to the second variable of the Garnier systems. 
Furthermore, we find two fourth-order nonlinear ordinary differential equations that describe the isomonodromy deformation but lack the Painlev\'e property. 
One of these equations has been already found by Dubrovin--Kapaev in 2014, 
while the other provides a new example.
\end{abstract}

\tableofcontents

% \author[K.\ Iwaki]{Kohei Iwaki}
% \address{Graduate School of Mathematical Science, The University of Tokyo}
% \email{k-iwaki@g.ecc.u-tokyo.ac.jp}

% \author[S. Kato]{Seiya Kato}
% \address{}
% \email{}

% \author[S. Sakurai]{Syotaro Sakurai}
% \address{}
% \email{}

% \dedicatory{}

% \thanks{}

% \subjclass[2010]{Primary: 34M55; 81T45. Secondary: 34M60; 34M56}

% \keywords{Sixth Painlev\'e equation;
% topological recursion; isomonodoromic deformation; exact WKB analysis}

\allowdisplaybreaks

% \tableofcontents

\setlength{\parskip}{0.5ex}

%%%%%%%%%%%%%%%%%%%%%%%%%%%%%%%%%%%%%%%%%%%%%%%%%%%%%%%%%%%%%%%%%%%%%%
%%%%%%%%%%%%%%%%%%%%%%%%%%%%%%%%%%%%%%%%%%%%%%%%%%%%%%%%%%%%%%%%%%%%%%
%%%%%%%%%%%%%%%%%%%%%%%%%%%%%%%%%%%%%%%%%%%%%%%%%%%%%%%%%%%%%%%%%%%%%%
\section{Introduction}
\label{sec:intro}

The Painlev\'e equations were originally discovered through the pioneering work 
of Paul Painlev\'e in the early 20th century. 
These six second-order, nonlinear ordinary differential equations 
(known as the $P_{\rm I}$ -- $P_{\rm VI}$) 
are characterized by the so-called Painlev\'e property;
that is, their solutions possess no movable critical singularities. 
These equations are now recognized as a remarkable class of nonlinear ordinary differential equations, 
notable for their intricate mathematical structures and deep connections to areas 
of theoretical physics, including random matrix theory and string theory. 

Among the many fascinating aspects of Painlev\'e equations, 
this paper focuses on % the relationship between 
the isomonodromy deformation and a certain properties of 
their Laurent series solutions (the Painlev\'e test). 
It is known that each Painlev\'e equation describes the isomonodromy deformation 
of a certain linear differential equation with rational coefficients, 
and solutions of the Painlev\'e equation possess the monodromy-Stokes data of 
the linear differential equation as conserved quantities (\cite{JMU81, JM81}). 
On the other hand, the Laurent series expansion of the solutions to 
the Painlev\'e equations at movable poles also possesses a certain nontrivial structure. 
More precisely, when assuming that the solution has a Laurent series expansion 
at a generic reference point, we can show that 
all series coefficients are determined without contradiction, 
they contains as many free parameters as the order of the equation, 
and the Laurent series converges on a punctured neighborhood of the reference point. 
A differential equation having this property is said to pass the (extended) Painlev\'e test
(\cite{ARS80, CM08}; see also \cite{Chi15}). 
These two characteristics both signify the integrability of the Painlev\'e equations.

Previous studies including \cite{IN86, FIKN06, Mas10, DK18} 
have revealed that there is an even more nontrivial structure between 
the isomonodromy systems and the Laurent series solution of the Painlev\'e equations. 
That is, although the solutions to the Painlev\'e equation diverge at movable poles, 
in the associated isomonodromy system (if a suitable gauge transformation is applied if necessary), 
the divergence of the solution at the movable pole can be canceled, 
and it becomes possible to restrict the isomonodromy system to the movable pole. 
Borrowing the terminology used in Dubrovin--Kapaev's paper \cite{DK14}, 
which is particularly relevant to our work, 
we will refer to the limiting procedure that restricts 
the isomonodromy system to the movable pole 
% of the solution of the Painlev\'e equaition 
as ``singularity reduction''.
In the aformentioned previous studies, it has been shown that 
% the reduced triconfluent Heun equation, 
% the triconfluent Heun equation, 
% the doubly reduced doubly confluent Heun equation 
% and the usual Heun equation 
the (confluent) Heun equations are obtained through the singularity reduction 
from the isomonodromy systems associated with the Painlev\'e equations.
% Painlev\'e I (\cite{Mas10}),  
% Painlev\'e II,  
% Painlev\'e III (\cite{IN86}) 
% and Painlev\'e VI (\cite{DK18}), respectively. 
The free parameters that appear in the Laurent series solution 
during the Painlev\'e test parametrize the Heun equation. 
Furthermore, it is interesting to note that one of these free parameters provides 
the accessory parameters for the Heun equations.
% From this perspective, the classical problem in analysis of describing the relationship between the monodromy-Stokes data and the accessory parameters of the Heun equation leads to the correspondence between the monodromy-Stokes data and the solutions of the Painlev\'e equation via isomonodromy deformation, which is closely related to the Riemann--Hilbert problem.

On the other hand,  from the viewpoint of the exact WKB analysis (\cite{Vor83, KT98}), 
the monodromy-Stokes data of a Schr\"odinger-type % (or a singularly-perturbed) 
linear differential equations containing a small parameter $\hbar$ 
can be described perturbatively (but non-perturbatively by taking the Borel sum) 
in terms of the period integrals on the algebraic curve that arises 
as the classical limit (c.f., \cite[Section 3]{KT98}). 
Exact WKB analysis can be applied to the Heun differential equations
by introducing $\hbar$ through a rescaling of variables, 
and their classical limit generically give elliptic curves.
Elliptic curves have two independent closed cycles,
% , the A-cycle and B-cycle, 
and it is consistent with the fact that the Painlev\'e equations are second-order equations.  
In fact, \cite{Iwa19} provides a detailed (albeit heuristic) analysis 
of the relationship between the period integrals of the Weierstrass elliptic curve 
and the Stokes data of the isomonodromy system associated with the first Painlev\'e equation. 
From this perspective, singularity reduction provides a framework for describing 
the Riemann-Hilbert correspondence between the general solutions of the Painlev\'e equations, 
represented as Laurent series, and the monodromy-Stokes data of the isomonodromy systems 
via the period map of elliptic curves.
We do not conduct exact WKB theoretic analysis in this paper, 
but we deal with isomonodromy systems where $\hbar$ has been introduced, 
based on \cite{KT98, Iwa19} and their underlying ideas.

This naturally leads to the expectation that, 
for the fourth-order (or four-dimensional) Painlev\'e equations 
studied in \cite{Kim89, KNS18, Kaw17} etc., 
a similar analysis would yield a genus-2 hyper-elliptic curve as 
the classical limit of the linear differential equation derived via the singularity reduction.
% \sout{\color{red}  Furthermore, the defining equation of the hyper-elliptic curve is expected to contain free parameters that describe the general solution of the fourth-order Painlev\'e equation in terms of a Laurent series. }
The primary objective of this paper is to explore these questions 
for certain fourth-order Painlev\'e equations, 
which arise as a restriction of the degenerate Garnier systems.

To the best knowledge of the authors, 
the first step for the study was given in the work of Dubrovin--Kapaev \cite{DK14}, 
where they study a fourth-order Painlev\'e equation obtained as 
the restriction of the degenerate Garnier systems of type 9/2. 
While it was shown by Shimomura \cite{Shimo00} 
that this equation passes the Painlev\'e test, 
\cite{DK14} confirmed the existence of an appropriate gauge transformation 
of the isomonodromy system that enables to have 
the singularity reduction at the movable pole of the Laurent series.
Furthermore, they showed that the equation obtained through the singularity reduction 
is also isomonodromic with respect to the second variable of the Garnier system, 
and they derived a fourth-order ordinary differential equation that describes 
the isomonodromy deformation. 
Additionally, Dubrovin--Kapaev \cite{DK14}, as well as the earlier work of Shimomura \cite{Shimo01},
showed that the fourth-order differential equation does not have 
a Laurent series solution, but has Puiseux series solutions with a third-order branching. 
This means that the fourth-order differential equation has 
the quasi-Painlev\'e property; 
that is, every movable singular point of each solution is an algebraic branch point.
This was a crucial discovery, highlighting the potential gap between the isomonodromy 
and Painlev\'e properties. 
% \sout{\color{red} However, a comprehensive understanding of this phenomenon remains elusive.}

In this paper, in addition to the Garnier system of type $9/2$  considered by \cite{DK14}, 
we also perform a similar analysis for the fourth-order Painlev\'e equation
obtained as a restriction of the Garnier system of type $5/2+3/2$ in the sense of \cite{Kaw17}. 
% {\color{red}
These labels are related to the spectral types at irregular singular points of the associated isomonodromy systems, and the fact that they are half-integers indicates that the singularities are ramified.
% }
% \sout{\color{red} The latter Garnier system possesses a rational Hamiltonian in terms of canonical variables. }
Using an elementary method different from that of \cite{Shimo00}, 
we can show that the Garnier system of type $5/2+3/2$ also 
passes the Painlev\'e test in Proposition \ref{prop:Lauren-5/2}.
As our first main result, 
we prove that both of the isomonodromy systems associated with these Garnier systems 
admits the singularity reduction in Theorem \ref{thm:SR-Gar9/2} and \ref{thm:SR-Gar5/2}. 
Furthermore, we also confirm that the classical limit of the linear differential equations 
arising from the singularity reduction are generically genus 2 hyper-elliptic curves, as expected. 

Our second main result is parallel to that in \cite{DK14}. 
That is, in Theorem \ref{thm:second-IM-9/2} and \ref{thm:second-IM-5/2}, 
we show that the linear differential equations 
(\eqref{eq:SR-Gar9/2} and \eqref{eq:SR-Gar5/2})
obtained through the singularity reduction from the isomonodromy systems 
associated with the Garnier systems of type $9/2$ and $5/2+3/2$
are also isomonodromic with respect to the second variable of the Garnier systems. 
We also confirmed that there exist fourth-order ordinary differential equations 
\begin{equation} \label{eq:4th-ODE-9/2-intro}
    \hbar^2 \alpha^{(4)} + 40(\alpha')^3 \alpha''
    +36t_2 \alpha' \,\alpha'' 
    +4\alpha \, \alpha'' + 20 \left(
    \alpha' \right)^2+6t_2=0,
\end{equation}
\begin{align} 
& \hbar^2 \left( \alpha^{(4)} 
+ \frac{2 \alpha^{(3)}}{t_2} 
- \frac{4 \alpha'' \alpha^{(3)}}{\alpha'} 
-\frac{3 (\alpha'')^2}{t_2 \alpha'} 
+ \frac{3 (\alpha'')^3}{(\alpha')^2} 
\right) 
\notag \\[+.3em]
& \quad 
+ \frac{4 \alpha''}{t_2^2 \alpha'} 
+ 12 t_2 (\alpha')^3 \alpha''
+ 4 \alpha \, (\alpha')^2 \alpha'' 
+ \frac{4 \alpha\, (\alpha')^3}{t_2} 
+ 14 (\alpha')^4 
+ \frac{2}{t_2^3} 
\label{eq:4th-ODE-5/2-intro}
\end{align}
(c.f., \eqref{eq:4th-ODE-9/2} and \eqref{eq:4th-ODE-5/2}) which 
describe the isomonodromy deformation of the above linear differential equations  
obtained via the singularity reduction. 
Additionally, we find that solutions of these fourth-order equations
do not have a movable pole, but have movable branch points. 
Among these fourth-order equations, \eqref{eq:4th-ODE-9/2-intro} 
is equivalent to the example in \cite{DK14}, 
while \eqref{eq:4th-ODE-5/2-intro} provides a new example that could represent a gap between isomonodromy and the Painlev\'e property.

% {\color{red} 
At first glance, our results may seem to contradict with the approaches to the Painlev\'e property via isomonodromy deformations established by 
Miwa \cite{Miwa81}, 
Malgrange \cite{Mal83a, Mal83b}, 
and the series of works by 
Inaba--Iwasaki--Saito 
\cite{IIS06a, IIS06b, IIS06c, Ina13, IS13}. 
For the case of the differential equation \eqref{eq:4th-ODE-9/2-intro} obtained from the 
Garnier system of type $9/2$, 
Dubrovin--Kapaev have briefly mentioned reasons for the exitence of such an equation  in \cite[Section 8]{DK14}, and we expect that a similar phenomenon may occur in the new equation \eqref{eq:4th-ODE-5/2-intro} as well.
On the other hand,
geometric structures of the moduli space of linear equations with ramified irregular singularities 
have also been studied in works such as \cite{BY15, Inaba22, Inaba23}. 
Interpreting our results and \cite{DK14} from geometric perspectives could lead 
to a deeper understanding of the relationship between the isomonodromy property and the Painlev\'e property.
% }

Finally, we note that Theorems \ref{thm:SR-Gar9/2} and \ref{thm:second-IM-9/2}, 
which address the Garnier system of type 9/2, are largely covered by the results of 
Dubrovin and Kapaev \cite{DK14}. 
While their work studied the singularity reduction of the isomonodromy system 
in matrix form \eqref{eq:Lax}--\eqref{eq:Lax-tj}, 
our approach tackles the problem using the Schr\"odinger form \eqref{eq:LIV}--\eqref{eq:DIV}. 
In the work \cite{DK14}, it was necessary to apply an appropriate gauge transform 
to resolve the divergence that arises when restricting to the movable pole, 
however, one of the main assertions of our results 
(Theorem \ref{thm:SR-Gar9/2} and \ref{thm:SR-Gar5/2}) 
are that the Schr\"odinger form automatically provides such an appropriate gauge choice, 
at least when we deal with the Garnier systems of type $9/2$ and $5/2+3/2$. 
Since this insight may be valuable for analyzing the singularity reduction of 
isomonodromy systems associated with other Garnier systems, 
we have chosen to include our computational results in this paper.
In particular, \cite[Conjecture 3.1]{DK14} claims that 
``the singularity reduction exists for any isomonodromy system''. 
Our findings support this conjecture, showing that the Garnier system of type 5/2 + 3/2 
provides an example that affirms it. 
% \sout{\color{red} Furthermore, our calculations suggest that the Schr\"odinger form may serve as the appropriate gauge for the singularity reduction, at least in the rank 2 case.} 
It remains an open question whether similar methods can be extended to other Garnier systems 
and the fourth-order Painlev\'e equations listed in \cite{Kaw17, KNS18}. 
Developing a general theory, in which the results presented here act as examples, 
is a promising direction for future work.

This paper is organized as follows. 
In Section \ref{sec:PIV}, as a review of the key concepts in this paper -- 
namely isomonodromy deformation, the Painlev\'e test, and singularity reduction -- 
we analyze the fourth Painlev\'e equation $P_{\rm IV}$, 
one of the second-order Painlev\'e equations, as an example. 
We will see that the biconfluent Heun equation arises as the singularity reduction 
of the isomonodromy system associated with $P_{\rm IV}$.
In Section \ref{sec:garnier}, we study the Garnier systems of type 9/2 and 5/3+3/2, 
and show that the associated isomonodromy systems 
have the singularity reduction.
Furthermore, we discuss the derivation of aforementioned
ordinary differential equations 
% \eqref{eq:4th-ODE-9/2} and \eqref{eq:4th-ODE-5/2}
that describe the isomonodromic deformation of 
a linear differential equation but exhibit the quasi-Painlev\'e property.

\vspace{-.5em}
\section*{Acknowledgements}
\vspace{-.5em}

We would like to thank 
Michi-aki Inaba, 
Takuro Mochizuki, 
Akane Nakamura, 
Toshifumi Noumi, 
Yousuke Ohyama, 
Masa-Hiko Saito, 
Hidetaka Sakai,  
Yoshitsugu Takei,
and 
Kouichi Takemura 
for fruitful and insightful discussions. 
The work of K.I. is supported by JSPS KAKENHI Grand Numbers 
21H04994, 22H00094, 23K17654, 24K00525.
This work is also supported by the Research Institute for Mathematical Sciences, 
an International Joint Usage/Research Center located in Kyoto University.
%%

% This is an English translation of the graduation research conducted by two of the authors at the University of Tokyo. 

%%%%%%%%%%%%%%%%%%%%%%%%%%%%%%%%%%%%%%%%%%%%%%%%%%%%%%%%%%%%%%%%%%%%%%
%%%%%%%%%%%%%%%%%%%%%%%%%%%%%%%%%%%%%%%%%%%%%%%%%%%%%%%%%%%%%%%%%%%%%%
%%%%%%%%%%%%%%%%%%%%%%%%%%%%%%%%%%%%%%%%%%%%%%%%%%%%%%%%%%%%%%%%%%%%%%
\section{Singularity reduction of isomonodromy system for $\PIV$}
% {Singularity reduction of isomonodromy system associated with $\PIV$}
% [Review of singularity reduction of isomonodromy system associated \protect\newline with $\PIV$]

\label{sec:PIV}

In this section, we review the singularity reduction of the isomonodromy system, 
taking the fourth Painlev\'e equation 
\begin{equation} \label{eq:PIV}
\PIV ~:~
\hbar^2\frac{d^2q}{dt^2}
=\frac{1}{2q}
\left( \hbar\frac{dq}{dt} \right)^2+\frac{3}{2}q^3+4tq^2
+(2t^2-2\theta_\infty)q-\frac{2{\theta_0}^2}{q} 
\end{equation}
as an example. It should also be noted that a similar analysis 
was conducted in \cite{IN86, FIKN06, Mas10, DK18} for example.

Here, we deal with the Painlev\'e equation containing a formal parameter $\hbar$, 
as considered in \cite{KT98}, 
and we assume that $\hbar \ne 0$ throughout the paper.
The parameter $\hbar$ is introduced to make the classical limit of 
the associated linear differential equation, which will be introduced below, well-defined.

%%%%%%%%%%%%%%%%%%%%%%%%%%%%%%%%%%%%%%%%%%%%%%%%%%%%%%%%%%%%%%%%%%%%%%
\subsection{Isomonodromy system for $\PIV$}
% {Isomonodromy system associated with $\PIV$}

First, using the isomonodromy system 
(i.e., a pair of the linear ODE and its deformation equation) 
associated with $\PIV$, 
we briefly review what isomonodromy deformation is (see \cite{JMU81, JM81, Oka09} for general results). 
Since this paper only deals with isomonodromy deformations of second-order linear differential equations, 
we will focus on the Schr\"odinger-type equations of the form:
% \begin{subequations}
\begin{align}
    %L ~:~ &
    & \hspace{+2.em} \left( \hbar^2 \frac{\partial^2}{\partial x^2} - Q(x,t)  \right)  \psi = 0, 
    % +p_1(x,t) \hbar \frac{\partial \psi}{\partial x}+p_2(x,t) \psi(x,t) =0, 
    \label{eq:LIV} \\[+.5em]
    % D ~:~  & 
    & \frac{\partial\psi}{\partial t}=
    \left( A(x,t)\frac{\partial}{\partial x} - 
    \frac{1}{2} \frac{\partial A}{\partial x}(x,t) \right) \psi, 
     \label{eq:DIV}
\end{align}
    	% \end{subequations}
where $Q$ and $A$ are given individually for each Painlev\'e equation under consideration, 
and they are generally rational functions of $x$, 
depending analytically on other parameters within a suitable domain. 
Specifically, for the case of $\PIV$, they are given as follows  (c.f., \cite{KT98, Iwa15}):
% \begin{subequations}
\begin{align}
Q = Q_{\rm IV} & = \frac{\theta_0^2}{4x^2} - \frac{\theta_\infty}{4} 
    + \left( \frac{x+2t}{4} \right)^2 + \frac{H_{\rm IV}}{2x} 
    - \hbar \frac{pq}{x(x-q)} + \hbar^2 \left( - \frac{1}{4x^2} + \frac{3}{4(x-q)^2} \right), \label{eq:QIV} \\[+.5em]
    % \\[+.3em]
    A = A_{\rm IV} & = \frac{2x}{x-q},
\end{align}
% \end{subequations}
where
\begin{align}
H_{\rm IV} = 2q \left[p^2 - \frac{\hbar p}{q} 
- \left( \frac{\theta_0^2 - \hbar^2}{4q^2} - \frac{\theta_\infty}{4} 
    + \left( \frac{q+2t}{4} \right)^2 \right) \right].
\end{align}  
% is the Hamiltonian for P$_{\rm IV}$. 
Through direct calculations, 
it can be confirmed that the compatibility condition of \eqref{eq:LIV}--\eqref{eq:DIV} 
can be written in terms of $Q$ and $A$ as 
\begin{equation} \label{eq:comp}
2\frac{\partial Q}{\partial t} + \hbar^2 \frac{\partial^3 A}{\partial x^3} 
- 4 Q \frac{\partial A}{\partial x} - 2 A \frac{\partial Q}{\partial x} = 0,
\end{equation}
and furthermore, this condition is found to be reduced to 
the Hamiltonian system 
\begin{equation} \label{eq:HamIV}
\hbar \frac{dq}{dt} = \frac{\partial H_{\rm IV}}{\partial p}, \quad
\hbar \frac{dp}{dt} = - \frac{\partial H_{\rm IV}}{\partial q}
\end{equation}
which is equivalent to  $\PIV$.
In this case, a nontrivial fundamental system of solutions that satisfy the system of equations 
\eqref{eq:LIV}--\eqref{eq:DIV} exists, and it follows from \eqref{eq:DIV} 
that the Stokes matrices around $x = \infty$ and the monodromy matrices around $x=0$ of \eqref{eq:LIV}
with respect to the solution do not depend on $t$. 
In this way, $P_{\rm IV}$ describes the isomonodromy deformation 
of linear differential equation \eqref{eq:LIV}. 
In other words, the monodromy data of \eqref{eq:LIV} provides conserved quantities 
of the solution to $P_{\rm IV}$. 
The connection between Painlev\'e equations and isomonodromy deformations 
highlights the integrable nature of the Painlev\'e equations.

Here, it should also be noted that the isomonodromy condition for 
a Schr\"odinger-type linear equation of the form \eqref{eq:LIV}
can always be expressed in the form of \eqref{eq:comp}.
We will use this criterion when we examine the isomonodromy condition in Section \ref{sec:garnier}.

%%%%%%%%%%%%%%%%%%%%%%%%%%%%%%%%%%%%%%%%%%%%%%%%%%%%%%%%%%%%%%%%%%%%%%
\subsection{Laurent series solution of $P_{\rm IV}$}
\label{subsec:PIVtest}

% In the following, we put the following genericity assumption:
% \begin{equation}
% \theta_0 \theta_\infty \ne 0.
% \end{equation}
It is well-known that, for any point $\alpha \in {\mathbb C}$, 
there exists a convergent Laurent series solution of $P_{\rm IV}$ 
at $t = \alpha$ of the form
\begin{align}
q(t)= \frac{1}{t - \alpha} \sum_{k=0}^\infty C_k(t-\alpha)^k,
\end{align}
with $C_k \in {\mathbb C}$. 
By substituting into $\PIV$, the first few coefficients are determined recursively. 
For the first term, there are two possible choices $C_0 = \pm 1$, 
but for now, let us consider the case where 
\begin{equation}
C_0 = 1.
\end{equation}
Then, the coefficients $C_1$, $C_2$ are determined as 
\begin{equation}
C_1 = - \alpha, \quad C_2 = \frac{\alpha^2 + 2 \theta_\infty - 4 \hbar}{3 \hbar}.
\end{equation}
In the determination of the next coefficients, a nontrivial phenomenon occurs. 
The linear equation that determines $C_3$ exhibits a resonance, 
making it trivially satisfied. As a result, $C_3$ can be chosen as a free parameter. 
For a later convenience, we set 
\begin{equation}
C_3 = \frac{\beta}{\hbar^2}
\end{equation}
with a free parameter $\beta$.
Then, it is easy to see that all subsequent coefficients are uniquely determined. 
Consequently, a Laurent series solution, 
which contains two free parameters $(\alpha, \beta)$, 
can be constructed as follows:
\begin{align} 
q(t) & = \frac{\hbar}{t - \alpha} - \alpha 
+ \frac{(\alpha^2 + 2 \theta_\infty - 4 \hbar)(t - \alpha) }{3 \hbar} 
+ \frac{\beta(t - \alpha)^2 }{\hbar^2}  
\notag \\[+.5em] 
& \quad + 
\frac{ (\alpha^4 + 36 \alpha \beta + 18 \theta_0^2 - 14 \theta_\infty^2 
+ 4 \alpha^2 (\theta_\infty - 5 \hbar) + 32 \theta_\infty \hbar - 26 \hbar^2)(t - \alpha)^3}{45 \hbar^3} 
\notag \\
& 
\quad + O((t-\alpha)^4).
\label{eq:LaurenPIVq}
\end{align}
We can also find that the Laurent series expansion of 
the other canonical variable $p$ in the Hamiltonian system 
\eqref{eq:HamIV} can also be obtained:  
\begin{align}
p(t) & = - \frac{\hbar}{4(t-\alpha)} - \frac{\alpha}{4} 
- \frac{(\alpha^2 - 4 \theta_\infty + 2 \hbar) (t - \alpha)}{12 \hbar} 
+ \frac{(3 \beta + 2 \alpha \theta_\infty - 2 \alpha \hbar)(t - \alpha)^2}{4 \hbar^2} 
\notag \\[+.5em] 
& \quad 
+ \frac{(\alpha^4 + 36 \alpha \beta - 72 \theta_0^2 + 16 \theta_\infty^2 
+ 4 \alpha^2 (16 \theta_\infty - 5 \hbar) - 28 \theta_\infty \hbar + 64 \hbar^2)(t_1 - \alpha)^3}{180 \hbar^3}
\notag \\
& 
\quad + O((t-\alpha)^4).
\label{eq:LaurenPIVp}
\end{align}
A proof of convergence of these series \eqref{eq:LaurenPIVq}--\eqref{eq:LaurenPIVp} 
was given in \cite[Section 4]{Oka09} for example. 
The parameters $(\alpha,\beta)$ can be regarded as free parameters
that parametrize the general solution of $\PIV$, and hencne, 
$P_{\rm IV}$ passes the Painlev\'e test. 
We note that Chiba \cite{Chi15} gave a criterion for the Painlev\'e test 
for differential equations with a weighted homogeneity 
% with the aid of the weighted projective spaces and the normal form theory of dynamical systems
(the case $P_{\rm IV}$ is covered in \cite{Chi16}).

%%%%%%%%%%%%%%%%%%%%%%%%%%%%%%%%%%%%%%%%%%%%%%%%%%%%%%%%%%%%%%%%%%%%%%
\subsection{Existence of singularity reduction} 
%{Singularity reduction of the isomonodromy system associated with $P_{\rm IV}$}

Since $Q_{\rm IV}$, given in \eqref{eq:QIV}, explicitly depends on $q$ and $p$,  
it is nontrivial to determine whether a limit of $Q_{\rm IV}$ as $t \to \alpha$
exists when $q$ and $p$ have a pole at $t = \alpha$, as described above. 
However, through direct calculation, we can verify the following fact:

\begin{prop} \label{prop:SR-PIV}
% Let $(q,p)$ be the Laurent series solution of the Hamiltonian system 
% \eqref{eq:HamIV} given in \eqref{eq:LaurenPIVq}--\eqref{eq:LaurenPIVp}. 
The Schr\"odinger potential $Q_{\rm IV}$ given in \eqref{eq:QIV}, 
with $(q,p)$ substituted by the  Laurent series solution 
\eqref{eq:LaurenPIVq}--\eqref{eq:LaurenPIVp}
of the Hamiltonian system \eqref{eq:HamIV}, 
has a finite limit as $t \to \alpha$: 
\begin{equation}
\lim_{t \to \alpha} Q_{\rm IV}(x,t) = 
% = R(x) = R_0(x) + \hbar R_1(x) + \hbar^2 R_2(x), 
\left( \frac{\theta_0^2}{4x^2} - \frac{\beta + \theta_\infty \alpha}{2x} 
- \frac{\theta_\infty}{4} + \left( \frac{x + 2 \alpha}{4} \right)^2 \right)
- \hbar \frac{(x - \alpha)}{4x} - \hbar^2 \frac{1}{4x^2}.
\label{eq:lim-QIV}
\end{equation}
% where 
% \begin{equation}
% R_0 = \frac{\theta_0^2}{4x^2} - \frac{2 \beta + 2 \theta_\infty \alpha}{4x} 
% - \frac{\theta_\infty}{4} + \left( \frac{x + 2 \alpha}{4} \right)^2, \quad
% R_1 = - \frac{(x - \alpha)}{4x}, \quad 
% R_2= - \frac{1}{4x^2}.
% \end{equation}
\end{prop}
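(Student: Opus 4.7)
The plan is to substitute the Laurent expansions \eqref{eq:LaurenPIVq}--\eqref{eq:LaurenPIVp} directly into the formula \eqref{eq:QIV} for $Q_{\rm IV}$ and verify that all divergent contributions cancel order by order in $(t-\alpha)$. The only pieces of $Q_{\rm IV}$ that look dangerous as $t\to\alpha$ are those depending on $(q,p)$, namely $H_{\rm IV}/(2x)$, $-\hbar\,pq/(x(x-q))$, and $3\hbar^2/(4(x-q)^2)$, since the remaining terms $\theta_0^2/(4x^2)$, $-\theta_\infty/4$, $((x+2t)/4)^2$, and $-\hbar^2/(4x^2)$ are manifestly regular at $t=\alpha$ and immediately reproduce the corresponding terms on the right-hand side of \eqref{eq:lim-QIV}.

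The first step would be to use partial fractions to rewrite
$$-\hbar\,\frac{pq}{x(x-q)} \;=\; \frac{\hbar p}{x}\;-\;\frac{\hbar p}{x-q},$$
and to expand $H_{\rm IV}/(2x)$ using its explicit definition. The $\hbar p/x$ contribution from the partial-fraction rewrite cancels precisely against the $-\hbar p/x$ contribution coming from the $-2\hbar p$ term inside $H_{\rm IV}$, killing the worst-looking simple pole in $p$. What remains of $H_{\rm IV}/(2x)$ has the form $\frac{1}{2x}\bigl(2qp^2 - q(q+2t)^2/8 + \theta_\infty q/2 - (\theta_0^2-\hbar^2)/(2q)\bigr)$, and the leading cubic divergence hidden in $2qp^2 - q(q+2t)^2/8$ is exposed by factoring it as $\tfrac{q}{8}\bigl(4p+(q+2t)\bigr)\bigl(4p-(q+2t)\bigr)$.

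The second step is to invoke the leading Laurent data in \eqref{eq:LaurenPIVq}--\eqref{eq:LaurenPIVp}. The crucial observation is that the principal parts of $4p$ and $-q$ coincide, so $4p+(q+2t)=O(t-\alpha)$, whereas $q\bigl(4p-(q+2t)\bigr)=O(1)$; their product therefore has a finite limit. The leftover pieces $-\hbar p/(x-q)$ and $3\hbar^2/(4(x-q)^2)$ are treated by expanding $1/(x-q)=-1/q - x/q^2 - \cdots$ as $q\to\infty$ and using $p/q\to -1/4$, which yields a bounded contribution. Collecting the $(t-\alpha)^0$ coefficient of every surviving piece and assembling should reproduce \eqref{eq:lim-QIV}. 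It is in this final assembly that the free parameter $\beta$ (entering through the $(t-\alpha)^2$ coefficient $\beta/\hbar^2$ of $q$) combines with $\theta_\infty\alpha$ coming from $\theta_\infty q/(4x)$ to produce the accessory-parameter-like coefficient $-(\beta+\theta_\infty\alpha)/(2x)$.

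The main obstacle is not conceptual but combinatorial: several of the pieces in question start at order $(t-\alpha)^{-3}$ and must be tracked out to order $(t-\alpha)^{0}$, so cancellations must be accounted for across four successive orders with the Laurent series carried to correspondingly high precision. In practice this is most safely verified with a computer algebra system. Conceptually, however, the mechanism behind the cancellation is clear: $Q_{\rm IV}$ is the Schr\"odinger potential of the Lax operator for $\PIV$ and the Hamiltonian equations for $(q,p)$ are arranged so that the apparent singularity at $x=q$ remains apparent even as $q\to\infty$, which is exactly the statement that the ``movable pole'' at $t=\alpha$ is invisible at the level of the isomonodromy system.
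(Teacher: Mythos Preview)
Your overall strategy --- substitute the Laurent expansions into $Q_{\rm IV}$ and check that the singular parts cancel --- is exactly what the paper does; its proof is the single sentence ``straightforward computation using \eqref{eq:LaurenPIVq}--\eqref{eq:LaurenPIVp}''. So at the level of \emph{approach} there is nothing to compare.

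However, the specific mechanism you sketch contains a genuine error. You claim that $q\bigl(4p-(q+2t)\bigr)=O(1)$, but from \eqref{eq:LaurenPIVq}--\eqref{eq:LaurenPIVp} one has $q\sim \hbar/(t-\alpha)$ and $4p-(q+2t)\sim -2\hbar/(t-\alpha)$, so in fact
\[
q\bigl(4p-(q+2t)\bigr)\;=\;-\frac{2\hbar^{2}}{(t-\alpha)^{2}}+O\!\left(\frac{1}{t-\alpha}\right).
\]
Multiplying by $4p+(q+2t)=\dfrac{2\theta_\infty}{\hbar}(t-\alpha)+O((t-\alpha)^{2})$ and by $1/8$ leaves a residual simple pole $-\dfrac{\theta_\infty\hbar}{2(t-\alpha)}$ in the factored cubic piece. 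That pole is \emph{not} killed internally; it is cancelled by the term $\dfrac{\theta_\infty q}{2}\sim\dfrac{\theta_\infty\hbar}{2(t-\alpha)}$, which you set aside and later treat as if it were already finite (``$\theta_\infty\alpha$ coming from $\theta_\infty q/(4x)$''). In other words, the correct grouping at order $(t-\alpha)^{-1}$ is
\[
\frac{q}{8}\bigl(4p+(q+2t)\bigr)\bigl(4p-(q+2t)\bigr)+\frac{\theta_\infty q}{2}\;=\;O(1),
\]
not the one you wrote. Once this is fixed, the rest of your outline (the partial-fraction cancellation of $\hbar p/x$, the $1/q$-expansion of $1/(x-q)$, and the final assembly producing $-(\beta+\theta_\infty\alpha)/(2x)$) goes through, and a CAS check confirms \eqref{eq:lim-QIV}.
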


\begin{proof}
The existence of the limit can be confirmed by a straightforward computation 
using \eqref{eq:LaurenPIVq}--\eqref{eq:LaurenPIVp}. 
\end{proof}

Let us denote by $R_{\rm IV}(x) = R_{0}(x) + \hbar R_{1}(x) + \hbar^2 R_{2}(x)$ 
the right hand side of \eqref{eq:lim-QIV}; that is, 
\begin{equation}
R_{0} = \frac{\theta_0^2}{4x^2} - \frac{2 \beta + 2 \theta_\infty \alpha}{4x} 
- \frac{\theta_\infty}{4} + \left( \frac{x + 2 \alpha}{4} \right)^2, \quad
R_{1} = - \frac{(x - \alpha)}{4x}, \quad 
R_{2}= - \frac{1}{4x^2}.
\end{equation}
Proposition \ref{prop:SR-PIV} implies that the linear equation \eqref{eq:LIV}
associated with $\PIV$ is reduced to 
\begin{equation} \label{eq:BC-Heun}
\left( \hbar^2 \frac{\partial^2}{\partial x^2} - R_{\rm IV}(x)  \right)  \psi = 0
\end{equation}
when we take the limit $t \to \alpha$. 
That is, the equation \eqref{eq:BC-Heun} is the singularity reduction 
(in the sense of \cite{DK14}) of the isomonodoromic linear ODE \eqref{eq:LIV}. 
The equation \eqref{eq:BC-Heun} is called the biconfluent Heun equation 
(c.f., \cite[Table 1]{LN21}), 
and we note that the free parameter $\beta$ is essentialy identified with 
the so-called accessory parameter.  
See also \cite{IN86, Mas10, DK18} for similar results, 
where the accessory parameters of the Heun-type equations are described 
by the free parameters that arise when passing the Painlev\'e test. 
 
As a consequence of the isomonodromy deformation, 
the monodromy data of the isomonodromic linear ODE \eqref{eq:LIV} matches 
that of the reduced linear ODE \eqref{eq:BC-Heun}. 
Therefore, analyzing the monodromy of \eqref{eq:BC-Heun} leads to a correspondence 
between the Laurent series solution \eqref{eq:LaurenPIVq} 
of the $\PIV$ and its conserved quantities. 
From the perspective of the exact WKB analysis (c.f., \cite{Vor83, KT98}), 
period integrals (so-called Voros periods) on the algebraic curve defined by
\begin{equation}
% \Sigma_{P_{\rm IV}} ~:~ 
y^2 = 
R_{0} = \frac{\theta_0^2}{4x^2} - \frac{2 \beta + 2 \theta_\infty \alpha}{4x} 
- \frac{\theta_\infty}{4} + \left( \frac{x + 2 \alpha}{4} \right)^2, 
\label{eq:CL-PIV}
\end{equation}
that arises in the classical limit of equation \eqref{eq:BC-Heun},  
describes the monodromy data of \eqref{eq:BC-Heun}. 
Since the classical limit \eqref{eq:CL-PIV} is an elliptic curve, 
(with a generic choice of the parameters $\alpha, \beta, \theta_0, \theta_\infty$), 
there are two closed cycles which provides independent period integrals. 
This provides a geometric interpretation for why the dimension of 
the monodromy manifold (excluding $\theta_0$ and $\theta_\infty$ 
related to characteristic exponents) of \eqref{eq:BC-Heun} is 2, 
which aligns with the fact that $\PIV$ is a second-order differential equation. 

Since $(\alpha,\beta)$ parametrize the general solution of $\PIV$ 
through \eqref{eq:LaurenPIVq}--\eqref{eq:LaurenPIVp}, 
the explicit form of \eqref{eq:CL-PIV} can be interpreted as providing 
a correspondence between the general solution of $\PIV$ and elliptic curves.
% This leads to a natural expectation: for a fourth-order Painlev\'e-type equation that describes the isomonodromy deformation of certain linear ODEs, a similar analysis would yield a genus 2 hyper-elliptic curve as the classical limit of the linear ODE obtained through singularity reduction.  Furthermore, the defining equation of the hyper-elliptic curve would include free parameters that describes the general solution of the fourth-order Painlev\'e equation.
In the following section, we will conduct a similar analysis for 
certain fourth-order Painlev\'e equations, 
where hyper-elliptic curves of genus 2 naturally emerge in place of elliptic curves.

%%%%%%%%%%%%%%%%%%%%%%%%%%%%%%%%%%%%%%%%%%%%%%%%%%%%%%%%%%%%%%%%%%%%%%
%%%%%%%%%%%%%%%%%%%%%%%%%%%%%%%%%%%%%%%%%%%%%%%%%%%%%%%%%%%%%%%%%%%%%%
%%%%%%%%%%%%%%%%%%%%%%%%%%%%%%%%%%%%%%%%%%%%%%%%%%%%%%%%%%%%%%%%%%%%%%
\section{Singularity reduction of isomonodromy systems for Garnier systems}
% \section[Singularity reduction of isomonodromy systems associated with \\ Garnier systems of type $9/2$ and $5/2+3/2$]{Singularity reduction of isomonodromy systems associated with Garnier systems of type $9/2$ and $5/2+3/2$}

\label{sec:garnier}

As stated in Section \ref{sec:intro}, the purpose of this paper is to investigate 
whether the singularity reduction, reviewed in the previous section, 
can be applied to the isomonodromy systems associated with fourth-order Painlev\'e equations. 
This problem has been studied by Dubrovin--Kapaev in \cite{DK14}
for the degenerate Garnier systems of type $9/2$. 
We will revisit a part of their result, and perform a similar computation 
for the degenerate Garnier systems of type $5/2+3/2$ as well (c.f., \cite{Kaw17}). 

Below, we abbreviate these Garnier systems as 
${\rm Gar_{9/2}}$ and ${\rm Gar_{5/2+3/2}}$.

%%%%%%%%%%%%%%%%%%%%%%%%%%%%%%%%%%%%%%%%%%%%%%%%%%%%%%%%%%%%%%%%%%%%%%
\subsection{Isomonodromy systems for ${\rm Gar_{9/2}}$ and ${\rm Gar_{5/2+3/2}}$}
% {Isomonodromy system associated with ${\rm Gar_{9/2}}$ and ${\rm Gar_{5/2+3/2}}$}
% \label{subsec:Lax-vs-Sch}

Both of the Garnier systems ${\rm Gar_{9/2}}$ and ${\rm Gar_{5/2+3/2}}$
describe the isomonodromy deformations of rank 2 linear system of the form
\begin{equation} \label{eq:Lax}
\hbar\frac{\partial \Phi}{\partial x} = L(x, t_1, t_2) \Phi
\end{equation}
with two independent isomonodromic times $(t_1, t_2)$, 
where $L$ is a $2\times2$ matrix whose entries are rational in $x$.
In this subsection, we adopt the matrix from \eqref{eq:Lax} 
to simplify the description of the isomonodromy system. 
However, it can be reduced to the Schr\"odinger form 
like \eqref{eq:LIV} described in the previous section 
with keeping the isomonodromic property.
This is a well-known process, but we will recall it in next subsection. 
% \ref{subsec:Lax-vs-Sch} 
% how the rank 2 system of the form \eqref{eq:Lax} can be reduced to the Schr\"odinger-type equation like \eqref{eq:LIV}.

The isomonodromy deformation of \eqref{eq:Lax} is described by 
the deformation equation of the form
\begin{equation} \label{eq:Lax-tj}
\hbar\frac{\partial \Phi}{\partial t_j} = M_j(x, t_1, t_2) \Phi
\quad (j = 1,2), 
\end{equation}
where $M_j$'s are appropriate $2\times2$ matrices with entries rational in $x$.  
The Garnier systems, describing the compatibility conditions
\begin{equation} \label{eq:comp-matrix}
\hbar \left( \frac{\partial L}{\partial t_j} - \frac{\partial M_j}{\partial x} \right) 
+ [L, M_j] = 0 
\qquad (j=1,2)
\end{equation} 
of \eqref{eq:Lax}--\eqref{eq:Lax-tj}, 
are expressed as a system of nonlinear PDEs
with respect to $(t_1, t_2)$, 
and can be formulated as a Hamiltonian system
\begin{equation} \label{eq:GarHam}
\hbar\frac{dq_i}{dt_j} =\frac{\partial H_j}{\partial p_i},\quad
\hbar\frac{dp_i}{dt_j} = -\frac{\partial H_j}{\partial q_i} \quad
(i, j = 1,2)
\end{equation}
with an appropriate Hamiltonians $H_1$ and $H_2$. 
The explicit forms of $L, M_1, M_2, H_1, H_2$ in  
\eqref{eq:Lax}--\eqref{eq:GarHam}
for our main examples are given as follows 
(c.f., \cite[Section 3.1]{Kaw17}; see also Remark \ref{rem:label-swap} below):

\begin{itemize}
\item 
For ${\rm Gar}_{9/2}$, we have
\begin{subequations}
\begin{align}
L  ~~ & = ~ L_{\rm Gar_{9/2}} ~~~\, = ~ L_{0} \, x^3 + L_{1} \, x^2 
+ L_{2} \, x + L_{3}, \label{eq:L9/2} \\[+.3em]
M_1  & = ~ M_{\rm Gar_{9/2}, 1} ~ = ~ - L_{0} \, x + M_{10}, \\[+.3em]
M_2 & = ~ M_{\rm Gar_{9/2}, 2} ~ = ~  L_{0} \, x^2 +  L_{1} \, x + M_{20}, 
% \notag \\ 
% & = 
% \begin{pmatrix}
%     q_2x+q_1-p_1q_2 & x^3+p_1x^2+({p_1}^2+p_2+2t_1)x+{p_1}^3+2p_1p_2-{q_2}^2+t_1p_1-t_2\\
%     x^2-p_1x-p_2+t_1 & -q_2x-q_1+p_1q_2
% \end{pmatrix}.
\end{align}
\end{subequations}
with 
\begin{subequations}
\begin{align}
& L_0  = \begin{pmatrix} 0 & 1 \\ 0 & 0 \end{pmatrix}, \qquad
L_1 = \begin{pmatrix} 0 & p_1 \\ 1 & 0 \end{pmatrix},  \qquad
L_2 = \begin{pmatrix} q_2 & p_1^2+p_2+2t_2 \\ - p_1 & -q_2 \end{pmatrix}, \\[+.3em]
& L_3  = \begin{pmatrix} q_1 - p_1 q_2 & p_1^3 + 2p_1 p_2 - q_2^2 + t_2 p_1 - t_1 \\ 
-p_2 + t_2 & -q_1 + p_1 q_2 \end{pmatrix}, 
\\[+.3em]
& 
M_{10} = \begin{pmatrix} 0 & - 2p_1 \\ -1 & 0 \end{pmatrix},
\qquad
M_{20} = \begin{pmatrix} q_2 & p_1^2 + 2p_2 + t_2 \\ - p_1 & - q_2 \end{pmatrix}.
\end{align}
\end{subequations}
The Hamiltonians $H_1$ and $H_2$, which we describe ${\rm Gar}_{9/2}$ as \eqref{eq:GarHam}, 
are given as follows:
\begin{subequations}
\begin{align}
H_1 = H_{\mathrm{Gar_{9/2}},1}
& ={p_1}^4+3{p_1}^2p_2+p_1{q_2}^2-2q_1q_2+{p_2}^2-t_1p_1+t_2p_2, \\[+.3em]
H_2 = H_{\mathrm{Gar_{9/2}},2}
& = -{p_1}^3p_2+{p_1}^2{q_2}^2 +t_2{p_1}^3-2p_1q_1q_2-2p_1{p_2}^2+p_2{q_2}^2\notag \\ 
& \quad +t_2p_1p_2+{q_1}^2-t_2{q_2}^2+{t_1}^2p_1+t_1p_2.
\end{align}  
\end{subequations}

\newpage
\item 
For ${\rm Gar}_{5/2+3/2}$, we have
\begin{subequations}
\begin{align}
L  ~~ & = ~ L_{\rm Gar_{5/2+3/2}} ~~~\, = ~ L_{0} \, x + L_{1} + \frac{L_{2}}{x} + \frac{L_{3}}{x^2}, 
\label{eq:L5/2+3/2} \\[+.3em]
M_1  & = ~ M_{\rm Gar_{5/2+3/2}, 1} ~ = ~ M_{10} \, x + M_{11}, \\[+.3em]
M_2 & = ~ M_{\rm Gar_{5/2+3/2}, 2} ~ = ~ M_{20} - \frac{L_3}{t_2 x}, 
% \notag \\ 
% & = 
% \begin{pmatrix}
%     q_2x+q_1-p_1q_2 & x^3+p_1x^2+({p_1}^2+p_2+2t_1)x+{p_1}^3+2p_1p_2-{q_2}^2+t_1p_1-t_2\\
%     x^2-p_1x-p_2+t_1 & -q_2x-q_1+p_1q_2
% \end{pmatrix}.
\end{align}
\end{subequations}
with 
\begin{subequations}
\begin{align}
& L_0  = \begin{pmatrix} 0 & 1 \\ 0 & 0 \end{pmatrix}, \quad
L_1 = \begin{pmatrix} q_1 & p_1 - q_1^2 - t_1 \\ 1 & - q_1 \end{pmatrix},  
% \quad
\\[+.3em] & 
L_2  = \begin{pmatrix} p_2 q_2 & q_2 \\ -p_1 & -p_2 q_2  \end{pmatrix}, \quad
L_3 = \begin{pmatrix} 0 & 0 \\ {t_2}/{q_2} & 0 \end{pmatrix}, 
\\[+.3em]
& 
M_{10} = \begin{pmatrix} 0 & -1 \\ 0 & 0 \end{pmatrix},
\quad
M_{11} = \begin{pmatrix} - q_1 & 0 \\ -1 & q_1 \end{pmatrix},
\quad
M_{20} = \begin{pmatrix} 0 &-q_2/t_2 \\ 0 & 0 \end{pmatrix}.
\end{align}
\end{subequations}
The Hamiltonians $H_1$ and $H_2$, which we describe ${\rm Gar}_{5/2+3/2}$ as \eqref{eq:GarHam},  
are given as follows:
\begin{subequations}
\begin{align}
H_1 = H_{\mathrm{Gar_{5/2+3/2}},1}
& = p_1^2 - (q_1^2 + t_1) p_1 - 2 p_2 q_1 q_2 - q_2 - \frac{t_2}{q_2}, 
\label{eq:Ham1-Gar-5/2}\\[+.3em]
H_2 = H_{\mathrm{Gar_{5/2+3/2}},2}
& = 
% {\color{red} 
\frac{p_2^2 q_2^2 - p_1 q_2}{t_2} + \frac{p_1 - q_1^2 - t_1}{q_2}. 
% }
\label{eq:Ham2-Gar-5/2}
\end{align}  
\end{subequations}

\end{itemize}

The classification of fourth-order Painlev\'e equations
has been relatively recently developed in works such as \cite{Kaw17, KNS18}, 
but the some of equations had already been introduced earlier 
from the perspective of generalizing the Painlev\'e equations. 
For example, \cite{Kim89} has already introduced the
Garnier system of type $9/2$ which we will analyze in Section \ref{subsec:Lax-Gar}.
A priori, it is not guaranteed that the Hamiltonian system \eqref{eq:GarHam} 
can be reduced to a fourth-order differential equation for an unknown function, 
so, it might be more appropriate to refer 
to this system as a four-dimensional Painlev\'e equation following \cite{KNS18}.

\begin{rem} \label{rem:label-swap}
We follow the approach of \cite{Nak17} and swap the labels of 
the isomonodromic times of the Garnier systems of type $9/2$ in \cite{Kaw17}. 
Specifically, the variable $t_1$ (resp., $t_2$) in \cite{Kaw17} 
is identical to $t_2$ (resp., $t_1$) in this paper. 
The reason for swapping the labels in this way is to facilitate 
a parallel discussion for the two examples in the following sections. 
This choice is also related to the degrees of Hamiltonians 
with respect to the weighted homogeneity that the equations possess
(c.f., \cite{Nak20, Chi24}). 
\end{rem}

%%%%%%%%%%%%%%%%%%%%%%%%%%%%%%%%%%%%%%%%%%%%%%%%%%%%%%%%%%%%%%%%%%%%%%
\subsection{From rank 2 system to Schr\"odinger form}
\label{subsec:Lax-vs-Sch}

Here we will briefly review how to obtain the isomonodromy system 
in the Schr\"odinger form \eqref{eq:LIV}-\eqref{eq:DIV}
from the rank 2 system \eqref{eq:Lax}-\eqref{eq:Lax-tj}. 

Suppose we have given a rank 2 system of the form \eqref{eq:Lax} 
with a $2\times2$ matrix $L = (L_{ij})_{i,j =1,2}$.
Then, we can verify that the first entry $\phi_1$ of 
$\Phi = {}^{t}(\phi_1, \phi_2)$
satisfies the following scaler ODE: 
\begin{equation} \label{eq:scalar-ODE-gl2}
\left(\hbar^2 \frac{\partial^2}{\partial x^2} 
+ P_1(x,t) \hbar \frac{\partial}{\partial x}
+ P_2(x,t) \right) \phi_1 = 0,
\end{equation}
where 
\begin{subequations}
\begin{align}
P_1  & = - L_{11} - L_{12} 
- \frac{\hbar}{L_{12}} \frac{\partial L_{12}}{\partial x}, \\[+.3em]
P_2 & = L_{11} L_{22} - L_{12} L_{21} 
+ \hbar \left(- \frac{\partial L_{11}}{\partial x} + 
\frac{L_{11}}{L_{12}}\,\frac{\partial L_{12}}{\partial x} \right).
\end{align} 
\end{subequations}
As is well-known, the zeros of $L_{12}$ gives the so-called apparent singular points
of \eqref{eq:scalar-ODE-gl2}. 
Further applying the gauge transform 
\begin{equation} \label{eq:gauge}
\phi_1 = G \psi, \quad 
G = \exp\left( - \frac{1}{2\hbar} \int P_1(x) \, dx \right), 
\end{equation}
the equation \eqref{eq:scalar-ODE-gl2} is converted into 
the Schr\"odinger form \eqref{eq:LIV} 
with the potential function
\begin{equation} \label{eq:Sch-pot}
Q = - P_2 + \frac{P_1^2}{4} + \frac{\hbar}{2} \frac{\partial P_1}{\partial x}.
\end{equation}

On the other hand, if $\Phi$ also satisfies the deformation equation \eqref{eq:Lax-tj}, 
then $\phi_1$ satisfies the PDE
\begin{equation} \label{eq:Lax-tj-gl2}
\hbar \frac{\partial \phi_1}{\partial t_j} 
= M_{j,11} \phi_1 + M_{j,12} \phi_2 
= \frac{\hbar M_{j,12}}{L_{12}} \frac{\partial \phi_1}{\partial x}
+ \left( M_{j,11} - \frac{M_{j,12} L_{11}}{L_{12}} \right) \phi_1, 
\end{equation} 
where $M_{j, kl}$ is the $(k,l)$-entry of the matrix $M_j$. 
Here we have also used \eqref{eq:Lax}
to express $\phi_2$ in terms of $\partial \phi_1/\partial x$. 
Then, the gauge transform \eqref{eq:gauge} converts \eqref{eq:Lax-tj-gl2} 
to 
\begin{equation} \label{eq:Lax-tj-sl2-pre}
\hbar \frac{\partial \psi}{\partial t_j} 
= \frac{\hbar M_{j,12}}{L_{12}} \frac{\partial \psi}{\partial x}
+ \left( M_{j,11} - \frac{M_{j,12} L_{11}}{L_{12}}
- \frac{M_{j,12} P_1}{2L_{12}} 
% - \frac{\hbar}{G} \frac{\partial G}{\partial x}
- \frac{\hbar}{G} \frac{\partial G}{\partial t_j}  \right) \psi.
\end{equation} 
% \begin{prop}
Then, if we take an appropriate normalization of $G$ given in \eqref{eq:gauge} 
by multiplying it with a function of $t_j$
which does not depend on $x$, then the 
equation \eqref{eq:Lax-tj-sl2-pre} can be reduced to
\begin{equation} \label{eq:Lax-tj-sl2}
\frac{\partial\psi}{\partial t_j}=
    \left( A_j\frac{\partial}{\partial x} - 
    \frac{1}{2} \frac{\partial A_j}{\partial x} \right) \psi
    \end{equation} 
with 
\begin{equation} \label{eq:GarAj}
A_j = \frac{M_{j,12}}{L_{12}}. 
\end{equation}
Thus we have obtained the Schr\"odinger form \eqref{eq:LIV}--\eqref{eq:DIV}.
%  \end{prop}

% \begin{proof}
% Using the compatibility condition \eqref{eq:comp-matrix}, we can verify 
% \begin{equation}
% \frac{\partial}{\partial x}  \left( - \frac{1}{2}\frac{\partial A_j}{\partial x} \right) = \frac{\partial}{\partial x} \left( M_{j,11} - \frac{M_{j,12} L_{11}}{L_{12}} - \frac{M_{j,12} P_1}{2L_{12}}  - \frac{\hbar}{G} \frac{\partial G}{\partial t_j}  \right). 
% \end{equation}
% This means that the second terms of the right hand sides of \eqref{eq:Lax-tj-sl2-pre} and \eqref{eq:Lax-tj-sl2} differ  by a constant (function of $t_j$) which is independent of $x$. This constant difference can be eliminated by multiplying $G$ by a function of $t_j$ that is independent of $x$. Thus, \eqref{eq:Lax-tj-sl2-pre} can be reduced to the form of \eqref{eq:Lax-tj-sl2}.
% \end{proof}

We denote by 
$Q_{\rm Gar_{9/2}}$ and $Q_{\rm Gar_{5/2+3/2}}$ 
the Schr\"odinger potential \eqref{eq:Sch-pot}
obtained from the isomonodromy linear system \eqref{eq:Lax}--\eqref{eq:Lax-tj}
associated with Garnier systems ${\rm Gar}_{9/2}$ and ${\rm Gar}_{5/2+3/2}$
% the $2 \times 2$ matrix \eqref{eq:L9/2} and \eqref{eq:L5/2+3/2}
by the above procedure, respectively. 
We also denote by $A_{{\rm Gar_{9/2}}, j}$ and $A_{{\rm Gar_{5/2+3/2}}, j}$ 
the function \eqref{eq:GarAj} defined in a similar manner. 
It is possible to explicitly write down the expressions of these functions, 
but due to their complexity, we will omit them here.
In the next section, we will investigate the behavior 
of these functions under singularity reduction.

%%%%%%%%%%%%%%%%%%%%%%%%%%%%%%%%%%%%%%%%%%%%%%%%%%%%%%%%%%%%%%%%%%%%%%
\subsection{Analysis of ${\rm Gar_{9/2}}$}
\label{subsec:Lax-Gar}

As mentioned above, the singularity reduction at the movable poles 
of the Garnier system of type $9/2$ has been discussed in the previous work \cite{DK14} 
of Dubrovin--Kapaev. 
Here, we revisit their result through the analysis in the Schr\"odinger form, 
and recall their important observation, 
which highlights the existence of a differential equation that does not exhibit 
the Painlev\'e property, despite describing the isomonodromy deformation.

% The singularity reduction in the work was explored without rewriting the system as a single equation like \eqref{eq:LIV}, but rather by seeking an appropriate gauge transformation that preserves the singularity reduction within the matrix-valued isomonodromy system. Our observation below suggests that the transformation to the Schr\"odinger-type form naturally provides such a gauge choice, indicating a clear connection between the structure of the isomonodromy system and the Schr\"odinger form. 
% This insight provides a new perspective on how singularity reduction can be achieved and understood within this framework.

%%%%%%%%%%%%%%%%%%%%%%%%%%%%%%%%%%%%%%%%%%%%%%%%%%%%%%%%%%%%%%%%%%%%%%
\subsubsection{Laurent series solution of ${\rm Gar}_{9/2}$}
\label{subsec:Laurent-9/2}

We begin by deriving the Laurent series solution for the Garnier system of type $9/2$.
Our example involves two isomonodromic time variables $(t_1, t_2)$, 
but for now, let's treat the equation as a system of ODEs % ordinary differential equations 
with respect to $t_1$, and proceed with the analysis in a manner similar to Section \ref{subsec:PIVtest}.
% In the previous study by \cite{Shimo00,Shimo01}, the construction of such Laurent series solutions and a proof of their convergence have already been provided.

% \newpage
First let us recall the following result of Shimomura.

\begin{prop}[{\cite[Theorem C]{Shimo00}}] \label{prop:Lauren-9/2}
When considering the Garnier system of type $9/2$ with $t_2$ restricted to a constant 
and treating it as a system of ODEs % ordinary differential equation 
with respect to $t_1$, 
there exists a convergent Laurent series solution of the following form:
\begin{subequations}
\begin{align}
q_1(t_1) & 
= -\frac{\hbar^5}{(t_1 - \alpha)^5} 
+ \frac{\beta \hbar^3}{(t_1 - \alpha)^3} + \gamma 
+ \frac{(10 \alpha - 18 t_2 \beta - 35 \beta^3)(t_1 - \alpha)}{70 \hbar} 
\notag \\[+.5em]
& \quad 
- \frac{3 (50 \beta \gamma - 3 \hbar) (t_1 - \alpha)^2}{20 \hbar^2} 
+ \frac{\delta(t_1 - \alpha)^3 }{\hbar^3} 
- \frac{(-36 t_2 \gamma + 105 \beta^2 \gamma - 5 \beta \hbar)(t_1 - \alpha)^4}{14 \hbar^4} 
\notag \\[+.5em]
& \quad + O((t_1-\alpha)^5),
\label{eq:LaurenGar9/2q1}
\\[+.5em] %%%
q_2(t_1) & = 
-\frac{\hbar^3}{(t_2 - \alpha)^3} 
-\frac{3(4 t_2 + 5 \beta^2)(t_1 - \alpha)}{20 \hbar} 
%- \frac{3(t_1 + \frac{5 \beta^2}{4})(t_2 - \alpha)}{5 \hbar} 
- \frac{6 \gamma (t_2 - \alpha)^2}{\hbar^2} 
\notag \\[+.5em]
& \quad
+ \frac{(4 \alpha - 24 t_2 \beta - 35 \beta^3)(t_1 - \alpha)^3}{14 \hbar^3} 
% + \frac{2(\alpha - 6 t_1 \beta - \frac{35 \beta^3}{4})(t_2 - \alpha)^3}{7 \hbar^3} 
+ \frac{(-30 \beta \gamma + \hbar)(t_1 - \alpha)^4}{4 \hbar^4} 
\notag \\[+.5em] 
& \quad+ \frac{3 (1008 t_2^2 + 400 \alpha \beta + 120 t_2 \beta^2 - 1925 \beta^4 + 1400 \delta)(t_1 - \alpha)^5 }
{15400 \hbar^5}
+ O((t_1-\alpha)^6),
\label{eq:LaurenGar9/2q2}
\\[+.5em] %%%
p_1(t_1) & = \frac{\hbar^2}{(t_1 - \alpha)^2} 
+ \frac{\beta}{2} 
-\frac{3 (4 t_2 + 5 \beta^2)(t_1 - \alpha)^2}{20 \hbar^2} 
% - \frac{3\left(t_2 + \frac{5 \beta^2}{4}\right)(t_1 - \alpha)^2}{5 \hbar^2} 
- \frac{4 \gamma (t_1 - \alpha)^3}{\hbar^3} 
\notag \\[+.5em]
& \quad
+ \frac{(4 \alpha - 24 t_2 \beta - 35 \beta^3)(t_1 - \alpha)^4 }{28 \hbar^4} 
+ \frac{(-30 \beta \gamma + \hbar)(t_1 - \alpha)^5}{10 \hbar^5}
\notag \\[+.5em]
& \quad
+ \frac{(1008 t_2^2 + 400 \alpha \beta + 120 t_2 \beta^2 - 1925 \beta^4 + 1400 \delta) (t_1 - \alpha)^6}
{15400 \hbar^6}
+ O((t_1-\alpha)^7),
\label{eq:LaurenGar9/2p1}
\\[+.5em] %%%
p_2(t_1) & = -\frac{3 \beta \hbar^2}{2 (t_1 - \alpha)^2} 
+ \left( t_2 + \frac{3 \beta^2}{2} \right) 
+ \frac{6 \gamma (t_1 - \alpha)}{\hbar} 
+ \frac{9(4 t_2 \beta + 5 \beta^3)(t_1 - \alpha)^2 }{40 \hbar^2} 
\notag \\[+.5em]
& \quad
+ \frac{(t_1 - \alpha)^3}{5 \hbar^2} 
- \frac{3(1008 t_2^2 + 400 \alpha \beta + 120 t_2 \beta^2 - 1925 \beta^4 - 1680 \delta)(t_1 - \alpha)^4}
{12320 \hbar^4} 
\notag \\[+.5em]
& \quad
- \frac{9 (32 t_2 \gamma + 70 \beta^2 \gamma - \beta \hbar)(t_1 - \alpha)^5}{140 \hbar^5} 
+ O((t_1-\alpha)^6),
\label{eq:LaurenGar9/2p2}
\end{align}
\end{subequations}
with free parameters $(\alpha, \beta, \gamma, \delta)$ independent of $t_1$. 
Namely, the restriction of the Garnier system of type $9/2$ 
passes the Painlev\'e test. 
\end{prop}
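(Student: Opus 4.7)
The plan is to carry out the standard Painlev\'e-test analysis for the Hamiltonian ODE system in $t_1$ derived from $H_1 = H_{\mathrm{Gar_{9/2}},1}$ (with $t_2$ treated as a constant), and then to invoke a majorant argument to secure convergence on a punctured neighborhood of $t_1 = \alpha$.

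Explicitly, the flow in $t_1$ reads
\begin{equation*}
\hbar \dot q_1 = 4p_1^3 + 6 p_1 p_2 + q_2^2 - t_1, \quad
\hbar \dot q_2 = 3 p_1^2 + 2 p_2 + t_2, \quad
\hbar \dot p_1 = 2 q_2, \quad
\hbar \dot p_2 = 2 q_1 - 2 p_1 q_2,
\end{equation*}
where the dot denotes $d/dt_1$. I would postulate Laurent ans\"atze $q_1 \sim A_1 \tau^{-5}$, $q_2 \sim A_2 \tau^{-3}$, $p_1 \sim B_1 \tau^{-2}$, $p_2 \sim B_2 \tau^{-2}$ with $\tau = t_1 - \alpha$, so that the pole location $\alpha$ is already the first free parameter. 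Matching the leading singular orders in each equation --- noting that in the first equation both $4p_1^3$ and $q_2^2$ contribute at order $\tau^{-6}$ --- forces $B_1 = \hbar^2$, $A_2 = -\hbar^3$, and $A_1 = -\hbar^5$, while $B_2$ remains undetermined at this stage; this is where the parameter $\beta$ enters through $B_2 = -\tfrac{3}{2}\beta\hbar^2$.

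Next I would expand $q_i, p_i$ as formal Laurent series built on these leading data, substitute into the system, and examine the resulting coefficient recursion. At each order $k$ the recursion takes the form $(M - kI)\,\mathbf{v}_k = \mathbf{F}_k$, where $M$ is a fixed $4 \times 4$ matrix obtained by linearising about the leading data and $\mathbf{F}_k$ is a polynomial in the previously computed coefficients together with $t_2$ and $\hbar$. The Kowalevski exponents --- the eigenvalues of $M$ --- include the universal $-1$ (reflecting the freedom in $\alpha$) together with three further nonnegative integers that, by direct computation, are precisely the orders at which $\beta$, $\gamma$, $\delta$ appear in (3.10a)--(3.10d). At each such resonance $k$ I would verify the compatibility condition that $\mathbf{F}_k$ lies in the image of $M - kI$; granted this, the corresponding component of $\mathbf{v}_k$ can be chosen freely, and all coefficients outside the resonant orders are uniquely determined by inverting $M - k'I$. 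The explicit coefficients in (3.10a)--(3.10d) then fall out of this recursion.

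For convergence I would follow the majorant method, as in Okamoto's treatment of the classical Painlev\'e equations \cite{Oka09}: after the substitution $q_i = \tau^{-n_i}\tilde q_i$, $p_i = \tau^{-m_i}\tilde p_i$, the system becomes a regular-singular ODE in $\tau$ for $(\tilde q_1, \tilde q_2, \tilde p_1, \tilde p_2)$, and a Cauchy-type majorant estimate then yields a positive radius of convergence. The main obstacle is the compatibility verification at the three nonnegative resonances: these amount to nontrivial polynomial identities in $(\alpha, \beta, \gamma, t_2, \hbar)$ that must hold unconditionally, and it is precisely their validity --- ultimately a manifestation of the integrable structure of the Garnier system --- that makes the Painlev\'e test succeed. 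In the present $9/2$ case this is the content of Shimomura's Theorem C in \cite{Shimo00}, and could in principle also be recovered from the weighted-homogeneity framework of Chiba \cite{Chi15}.
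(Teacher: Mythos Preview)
Your sketch is a correct outline of the standard Painlev\'e--Kowalevski analysis for this system, and the leading-order balance you describe checks out against the stated series. However, the paper itself offers no independent argument here: its entire proof is the single sentence ``See \cite[Theorem C]{Shimo00}.'' So there is nothing substantive in the paper to compare your proposal against --- you are effectively reconstructing what Shimomura's proof presumably contains, and you acknowledge as much in your final paragraph.

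It is worth noting, though, that for the parallel statement about ${\rm Gar}_{5/2+3/2}$ (Proposition~\ref{prop:Lauren-5/2}) the paper \emph{does} give its own proof, and it uses a different and more economical technique than the majorant method you propose: following Hu--Yan \cite{HY00}, the authors find an explicit birational change of dependent variables (a ``mirror transform'') under which the pole at $t_1 = \alpha$ becomes a regular initial-value problem, so that convergence follows directly from Cauchy's existence theorem rather than from a majorant estimate. That approach sidesteps both the resonance-compatibility checks and the majorant construction entirely, at the cost of having to discover the right birational map. Your proposed route via Kowalevski exponents and majorants is more systematic but heavier; either would work for the $9/2$ case.
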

\begin{proof}
See \cite[Theorem C]{Shimo00}.
\end{proof}

The first few coefficients of this Laurent series solution agrees with the results obtained 
by \cite[Section 3.1]{Nak17}, where the autonomous-version of the Garnier equation is considered
(see also \cite{Nak20}).

%%%%%%%%%%%%%%%%%%%%%%%%%%%%%%%%%%%%%%%%%%%%%%%%%%%%%%%%%%%%%%%%%%%%%%
\subsubsection{Existence of singularity reduction $SR_{\rm Gar_{9/2}}$}
% {Singularity reduction of the isomonodromy system associated with ${\rm Gar}_{9/2}$}
% [Singularity reduction of the isomonodromy system associated \\ with ${\rm Gar}_{9/2}$]

Recall that $Q_{\rm Gar_{9/2}}$ and $A_{{\rm Gar_{9/2}},j}$
are defined in \eqref{eq:Sch-pot} and \eqref{eq:GarAj}
obtained from the ismonodormy system associated with ${\rm Gar}_{9/2}$. 
As in the case of second-order Painlev\'e equations $\PIV$, 
it can be confirmed that the divergence of the Laurent series solution at the movable pole is offset when considering the associated isomonodromy system, 
leading to a finite limit.

\begin{thm}[{c.f., \cite[Theorem 3.1]{DK14}}] 
\label{thm:SR-Gar9/2} 
The functions $Q_{\rm Gar_{9/2}}$ and $A_{{\rm Gar_{9/2}},2}$, 
with $(q_1,q_2,p_1,p_2)$ substituted by the Laurent series solution 
\eqref{eq:LaurenGar9/2q1}--\eqref{eq:LaurenGar9/2p2}
of the Hamiltonian system \eqref{eq:GarHam}, %  equivalent to the Garnier system of type $9/2$, 
have finite limits as $t_1 \to \alpha$: 
\begin{align}
\lim_{t_1 \to \alpha} Q_{\rm Gar_{9/2}}(x,t_1, t_2)  
& = R_{\rm Gar_{9/2}}(x,t_2) \notag \\
& = R_{\rm Gar_{9/2},0}(x,t_2) + \hbar R_{{\rm Gar_{9/2}},1}(x,t_2) 
+ \hbar^2 R_{{\rm Gar_{9/2}},2}(x,t_2),
\label{eq:lim-QGar9/2}
\\
\lim_{t_1 \to \alpha} A_{{\rm Gar_{9/2}},2}(x,t_1, t_2) 
&  = B_{\rm Gar_{9/2}}(x,t_2) = \frac{2}{2x-3\beta},
\label{eq:BGar9/2}
\end{align}
where 
\begin{subequations}
\begin{align}
R_{\rm Gar_{9/2},0} & = 
x^5 + 3 t_2 x^3 - \alpha x^2 
+ \frac{3 (9072 t_2^2 + 8000 \alpha \beta - 34560 t_2 \beta^2 - 51975 \beta^4 - 15120 \delta)}{12320}\,x
\notag \\[+.5em]
& \quad 
-\frac{9 (9072 t_2^2 \beta + 1840 \alpha \beta^2 - 6840 t_2 \beta^3 
- 31185 \beta^5 - 221760 \gamma^2 - 15120 \beta \delta)}{24640}, 
\label{eq:RG9/2-0}\\[+.5em]
R_{\rm Gar_{9/2},1} & = \frac{18 \gamma}{2x-3\beta}, 
\label{eq:RG9/2-1} \\[+.5em]
R_{\rm Gar_{9/2},2} & = \frac{3}{(2x-3\beta)^2}. 
\label{eq:RG9/2-2}
\end{align}
\end{subequations}
\end{thm}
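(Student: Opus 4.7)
The plan is to proceed by direct computation, in the spirit of Proposition \ref{prop:SR-PIV}, but with the substantially heavier bookkeeping demanded by a rank-2 system whose denominator $L_{12}$ is a cubic polynomial in $x$.

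First, I would read off the entries of $L$ from \eqref{eq:L9/2} and of $M_2 = L_0 x^2 + L_1 x + M_{20}$, and apply the reductions of Section \ref{subsec:Lax-vs-Sch} to write $Q_{\rm Gar_{9/2}}$ and $A_{{\rm Gar_{9/2}},2} = M_{2,12}/L_{12}$ explicitly as rational functions of $x$, polynomial in $\hbar$, whose coefficients lie in $\bC[q_1, q_2, p_1, p_2, t_1, t_2]$. Concretely,
\[
L_{12} = x^3 + p_1 x^2 + (p_1^2 + p_2 + 2t_2)\,x + (p_1^3 + 2 p_1 p_2 - q_2^2 + t_2 p_1 - t_1),
\quad
M_{2,12} = x^2 + p_1 x + (p_1^2 + 2 p_2 + t_2),
\]
and the apparent singularities of the resulting Schr\"odinger equation are the three roots of $L_{12}$.

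Next, I would substitute the Laurent expansions \eqref{eq:LaurenGar9/2q1}--\eqref{eq:LaurenGar9/2p2} and track the pole orders at $t_1 = \alpha$: one has $\mathrm{ord}(p_1) = \mathrm{ord}(p_2) = -2$, $\mathrm{ord}(q_2) = -3$, and $\mathrm{ord}(q_1) = -5$. A naive count predicts that the constant-in-$x$ term of $L_{12}$ diverges as $(t_1-\alpha)^{-6}$, but the leading behaviours $p_1 \sim \hbar^2/(t_1-\alpha)^2$ and $q_2 \sim -\hbar^3/(t_1-\alpha)^3$ force the remarkable cancellation $p_1^3 - q_2^2 = O((t_1-\alpha)^{-4})$. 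Combined with the leading relation $p_2 + \tfrac{3\beta}{2}\,p_1 = O(1)$ that can be read off from \eqref{eq:LaurenGar9/2p1}--\eqref{eq:LaurenGar9/2p2}, this yields $L_{12} \sim (\hbar^4/(t_1-\alpha)^4)(x - \tfrac{3\beta}{2})$ and $M_{2,12} \sim \hbar^4/(t_1-\alpha)^4$ to leading order. Dividing shows at once that $A_{{\rm Gar_{9/2}},2}$ tends to $2/(2x - 3\beta)$, which establishes \eqref{eq:BGar9/2}.

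The main obstacle is the analogous verification for $Q_{\rm Gar_{9/2}}$. Because \eqref{eq:Sch-pot} involves $P_1^2$ and $\partial_x P_1$, with $P_1$ itself containing the logarithmic derivative $\partial_x L_{12}/L_{12}$, one must retain terms up to the orders listed in Proposition \ref{prop:Lauren-9/2} in order to certify that every negative power of $(t_1-\alpha)$ cancels in the expansion of $Q_{\rm Gar_{9/2}}$. A computer-algebra check is the most practical route; once finiteness is established, the remaining task is to match the coefficients of the three natural types of terms in $x$ — a polynomial of degree $5$, a simple pole at $x = 3\beta/2$, and a double pole at the same point — against the right-hand side of \eqref{eq:lim-QGar9/2}, thereby identifying $R_{{\rm Gar_{9/2}},0}$, $R_{{\rm Gar_{9/2}},1}$, $R_{{\rm Gar_{9/2}},2}$ with \eqref{eq:RG9/2-0}--\eqref{eq:RG9/2-2}. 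I expect the free parameter $\gamma$ to enter explicitly only in $R_{{\rm Gar_{9/2}},1}$, foreshadowing its role as an accessory parameter analogous to $\beta$ in the biconfluent Heun case of Section \ref{sec:PIV}.
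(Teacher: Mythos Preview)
Your proposal is correct and follows exactly the approach the paper takes: the paper's own proof consists of the single sentence ``the existence of the limit can be confirmed by a straightforward computation using \eqref{eq:LaurenGar9/2q1}--\eqref{eq:LaurenGar9/2p2}'', and you have simply spelled out the mechanics of that computation (including the key cancellation $p_1^3 - q_2^2 = O((t_1-\alpha)^{-4})$) in more detail than the paper does. One small correction to your closing remark: $\gamma$ does not enter only $R_{{\rm Gar_{9/2}},1}$ --- it also appears as $\gamma^2$ in the constant term of $R_{{\rm Gar_{9/2}},0}$, as is visible in \eqref{eq:RG9/2-0}.
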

\begin{proof}
The existence of the limit can be confirmed by a straightforward computation 
using \eqref{eq:LaurenGar9/2q1}--\eqref{eq:LaurenGar9/2p2}. 
\end{proof}

\begin{rem}
Theorem \ref{thm:SR-Gar9/2} is essentially shown in \cite[Theorem 3.1]{DK14}. 
While \cite{DK14} describes the isomonodromy system in matrix form, 
we present it as the Schr\"odinger form. 
In matrix form, it is necessary to find an appropriate gauge transformation 
for the singularity reduction to converge (c.f., \cite[eq.\,(3.1)]{DK14}), 
but at least in this example (as well as in $P_{\rm IV}$ and ${\rm Gar}_{5/2+3/2}$), 
the Schr\"odinger form automatically provides such a gauge choice.
\end{rem}

Theorem \ref{thm:SR-Gar9/2} implies that, 
when taking the limit $t_1 \to \alpha$, 
the singularity reduction
\begin{equation} \label{eq:SR-Gar9/2}
SR_{\rm Gar_{9/2}}~:~ 
\left( \hbar^2 \frac{\partial^2}{\partial x^2} - R_{\rm Gar_{9/2}}(x, t_2)  \right)  \psi = 0
\end{equation}
of the isomonodromy system associated with the Garnier system 
of type $9/2$ exists.
Moreover, for generic choice of the free parameters $(\alpha,\beta,\gamma,\delta)$,
the classical limit 
\begin{equation}
y^2 = R_{\rm Gar_{9/2},0}(x, t_2)
\label{eq:CL-Gar9/2}
\end{equation}
of $SR_{\rm Gar_{9/2}}$ defines a family of hyper-elliptic curves of genus 2 parametrized by $t_2$. 

% \begin{rem} 
% If we define the classical limit of $SR_{\rm Gar_{9/2}}$as $y^2 = R_{\rm Gar_{9/2}}$ without introducing $\hbar$, the genus will no longer be 2. As mentioned in the introduction, from the perspective of connections with exact WKB analysis etc., a genus-2 curve should emerge using this method.  To properly extract this part as the leading term, we consider equations with $\hbar$ introduced.
% \end{rem}

%%%%%%%%%%%%%%%%%%%%%%%%%%%%%%%%%%%%%%%%%%%%%%%%%%%%%%%%%%%%%%%%%%%%%%
\subsubsection{Isomonodromy property of $SR_{\rm Gar_{9/2}}$ and quasi-Painlev\'e property}

It follows from \eqref{eq:RG9/2-0}--\eqref{eq:RG9/2-2} that 
the the regular singular point $x = 3\beta/2$ of
the Schr\"odinger-type equation $SR_{\rm Gar_{9/2}}$  % \eqref{eq:SR-Gar9/2} 
is an apparent singular point. 
Furthermore, it can be shown that by appropriately choosing 
the aforementioned parameters $(\alpha,\beta,\gamma,\delta)$ 
as approproate functions of $t_2$, 
the equation $SR_{\rm Gar_{9/2}}$  % \eqref{eq:SR-Gar9/2} 
defines an isomonodromy family. 
The precise statement is given as follows.

\begin{thm}[{c.f., \cite[Theorem 3.2]{DK14}}]
\label{thm:second-IM-9/2}
If the parameters $(\alpha,\beta,\gamma,\delta)$ are functions of $t_2$ satisfying 
the system of ODEs
\begin{subequations}
\begin{align}
\hbar \frac{d\alpha}{dt_2} & = - \frac{3\beta\hbar}{2}, \label{eq:SP-9/2-1}
\\[+.5em]
\hbar \frac{d\beta}{dt_2} & = - {12 \gamma},
\\[+.5em]
\hbar\frac{d\gamma}{dt_2} & = -\frac{9 (336 t_2^2 - 160 \alpha \beta 
+ 1800 t_2 \beta^2 + 1925 \beta^4 - 560 \delta)}{12320},
\\[+.5em]
\hbar\frac{d\delta}{dt_2} & = 
\frac{103680 t_2 \beta \gamma -12000 \alpha \gamma 
+  311850 \beta^3 \gamma + 728 t_2 \hbar - 4665 \beta^2 \hbar}{1890}, 
\label{eq:SP-9/2-4}
\end{align}
\end{subequations}
then $R = R_{\rm Gar_{9/2}}$ satisfies 
\begin{equation} \label{eq:comp-SR-9/2}
2\frac{\partial R}{\partial t_2} + \hbar^2 \frac{\partial^3 B}{\partial x^3} 
- 4 R \frac{\partial B}{\partial x} - 2 B \frac{\partial R}{\partial x} = 0,
\end{equation}
with the choice $B = B_{\rm Gar_{9/2}}$ given in \eqref{eq:BGar9/2}. 
Therefore, $SR_{\rm Gar_{9/2}}$  % \eqref{eq:SR-Gar9/2} 
with $(\alpha, \beta, \gamma, \delta)$ 
satisfying \eqref{eq:SP-9/2-1}--\eqref{eq:SP-9/2-4}  
is compatible with the PDE 
\begin{equation} \label{eq:Lax-SR-9/2}
\frac{\partial\psi}{\partial t_2}=
    \left( B_{\rm Gar_{9/2}} \frac{\partial}{\partial x} - 
    \frac{1}{2} \frac{\partial B_{\rm Gar_{9/2}}}{\partial x} \right) \psi,
\end{equation}
and thus $SR_{\rm Gar_{9/2}}$ forms an isomonodromic family of linear ODEs parametrized by $t_2$.
\end{thm}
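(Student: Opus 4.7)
The plan is to establish the compatibility identity \eqref{eq:comp-SR-9/2} by direct substitution of the explicit formulas \eqref{eq:RG9/2-0}--\eqref{eq:RG9/2-2} and \eqref{eq:BGar9/2}, reading off the system \eqref{eq:SP-9/2-1}--\eqref{eq:SP-9/2-4} in the process. Once \eqref{eq:comp-SR-9/2} is verified, the remaining assertion that $SR_{\rm Gar_{9/2}}$ together with \eqref{eq:Lax-SR-9/2} forms an isomonodromic family follows at once from the general fact, already recalled as \eqref{eq:comp} in Section \ref{sec:PIV}, that \eqref{eq:comp-SR-9/2} is precisely the compatibility condition between a Schr\"odinger-type equation with potential $R$ and a first-order deformation of the form \eqref{eq:Lax-SR-9/2} with coefficient $B$.

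For the verification itself, I would substitute $R = R_{\rm Gar_{9/2}}$ and $B = B_{\rm Gar_{9/2}} = 2/(2x - 3\beta)$ into the left-hand side of \eqref{eq:comp-SR-9/2}, treating $(\alpha, \beta, \gamma, \delta)$ as unknown functions of $t_2$. Since $R_0$ is polynomial of degree $5$ in $x$, while $R_1$, $R_2$, and $B$ are rational with poles only at the apparent singularity $x = 3\beta/2$, the resulting expression is rational in $x$ with poles only at $x = 3\beta/2$. After clearing a suitable power of $(2x - 3\beta)$, the identity becomes a polynomial equation in $x$ whose coefficients are linear in $d\alpha/dt_2, d\beta/dt_2, d\gamma/dt_2, d\delta/dt_2$ with polynomial coefficients in $(\alpha, \beta, \gamma, \delta, t_2)$. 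A degree count shows that the polynomial part of the left-hand side has degree $3$ in $x$ (the $x^5$ and $x^4$ coefficients of $R_0$ being $t_2$-independent), while the principal part at $x = 3\beta/2$ has order up to $4$; comparing these Laurent coefficients produces an overdetermined linear system in the four unknown derivatives. I expect the top-degree coefficient (arising from $\partial R_0/\partial t_2$ together with the polynomial parts of $-2B\,\partial R/\partial x - 4R\,\partial B/\partial x$) to yield \eqref{eq:SP-9/2-1}, and the next three coefficients to yield \eqref{eq:SP-9/2-2}--\eqref{eq:SP-9/2-4}; the remaining relations must then become consequences of the first four.

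The main obstacle is not conceptual but computational, namely the algebraic bookkeeping needed to confirm that the overdetermined system collapses to \eqref{eq:SP-9/2-1}--\eqref{eq:SP-9/2-4} without forcing any further constraint on $(\alpha, \beta, \gamma, \delta)$. Geometrically, this residual check is the statement that the apparent singularity at $x = 3\beta/2$, whose residues in $R_1, R_2$ are tuned to produce a monodromy-free local solution via Frobenius analysis, is preserved under the $t_2$-flow; a nontrivial cancellation of $\hbar^2 \,\partial^3 B/\partial x^3$ against the pole contributions of $R_1$ and $R_2$ must occur. The calculation is most efficiently carried out with a computer algebra system, and the expected agreement with \cite[Theorem 3.2]{DK14}, which treats the matrix-form version of the same system, provides a strong independent check on the derived ODEs.
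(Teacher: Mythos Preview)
Your proposal is correct and follows essentially the same approach as the paper, which states only that the isomonodromy condition \eqref{eq:comp-SR-9/2} ``can be verified through a straightforward calculation'' and omits the details. Your outline of that calculation---substituting the explicit $R_{\rm Gar_{9/2}}$ and $B_{\rm Gar_{9/2}}$, clearing denominators at the apparent singularity $x=3\beta/2$, and reading off the system \eqref{eq:SP-9/2-1}--\eqref{eq:SP-9/2-4} from the resulting coefficient conditions---is exactly the intended verification, and the paper notes separately (Remark~\ref{rem:finding-secondary-isomonodromy}) that the ODE system was originally discovered by a different route, namely by imposing the $t_2$-Hamiltonian flow on the Laurent series.
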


Since the isomonodromy condition \eqref{eq:comp-SR-9/2} 
can be verified through a straightforward calculation, we omit the proof.

\begin{rem} \label{rem:finding-secondary-isomonodromy}
The system \eqref{eq:SP-9/2-1}--\eqref{eq:SP-9/2-4} 
of ODEs can be obtained as follows: 
First, we require that the Laurent series solution 
\eqref{eq:LaurenGar9/2q1}--\eqref{eq:LaurenGar9/2p2} 
constructed in Proposition \ref{prop:Lauren-9/2} 
simultaneously satisfies a Hamiltonian system with $t_2$ as the independent variable
(i.e., the equation \eqref{eq:GarHam} for $j = 2$). 
In Proposition \ref{prop:Lauren-9/2}, it was sufficient for 
$(\alpha,\beta,\gamma,\delta)$
to be constants independent of $t_1$, but by requiring them to depend on $t_2$, 
a differential equation for the coefficients of the Laurent series 
\eqref{eq:LaurenGar9/2q1}--\eqref{eq:LaurenGar9/2p2} 
with respect to $t_2$ is derived. 
The above \eqref{eq:SP-9/2-1}--\eqref{eq:SP-9/2-4} was discovered in this manner.
\end{rem}

The system \eqref{eq:SP-9/2-1}--\eqref{eq:SP-9/2-4} 
% of ODEs 
is reduced to the following fourth-order single ODE for $\alpha$:
\begin{equation} \label{eq:4th-ODE-9/2}
    \hbar^2 \alpha^{(4)} + 40(\alpha')^3 \alpha''
    +36t_2 \alpha' \,\alpha'' 
    +4\alpha \, \alpha'' + 20 \left(
    \alpha' \right)^2+6t_2=0,
\end{equation}
where $'$ means the derivative with respect to $t_2$.
We can verify that, through a certain rescaling of variables, 
the equation \eqref{eq:4th-ODE-9/2} is equivalent to the equation 
\cite[eq.\,(1.6)]{Shimo00} and \cite[eq.\,${\rm P}_{\rm I}^{(2,1)}$]{DK14}.
In these works, it was shown that the equation has the quasi-Painlev\'e property;
that is, every movable singular point of each solution is an algebraic branch point.
In fact, when $b$ is an arbitrary point, \eqref{eq:4th-ODE-9/2} has a solution 
that is expressed as a Puiseux series of the following form:
\begin{align}
\alpha(t_2) 
& = 
c_1 - 3^{1/3} \hbar^{2/3} (t_2 - b)^{1/3} 
+ \frac{9 \cdot 3^{2/3} b (t_2 - b)^{5/3}}{35 \hbar^{2/3}} 
- \frac{3^{1/3} c_1 (t_2 - b)^{7/3}}{7 \hbar^{4/3}} 
\notag \\[+.3em]
& \quad 
+ \frac{9 \cdot 3^{2/3} (t_2 - b)^{8/3}}{20 \hbar^{2/3}} 
+ \frac{c_2 (t_2 - b)^3}{\hbar^2} 
+ \frac{c_3 (t_2 - b)^{11/3}}{3^{1/3} \hbar^{8/3}} 
- \frac{729 b (t_2 - b)^4}{980 \hbar^2} 
\notag \\[+.3em]
& \quad 
- \frac{3^{1/3} (729 b^3 + 1372 c_1^2 + 23814 b c_2) (t_2 - b)^{13/3}}{22295 \hbar^{10/3}}
+ O((t_2-b)^{{16}/{3}}), 
\label{eq:PSer-9/2}
\end{align}
where $(b, c_1, c_2, c_3)$ can be taken as free parameters.
Since the number of free parameters in the Puiseux series 
is equal to the order of ODEs, we can say that 
\eqref{eq:4th-ODE-9/2} passes ``quasi-Painlev\'e test''.

According to Theorem \ref{thm:second-IM-9/2}, equation \eqref{eq:4th-ODE-9/2} 
(or the system \eqref{eq:SP-9/2-1}--\eqref{eq:SP-9/2-4}) 
describes the isomonodromy deformation of $SR_{\rm Gar_{9/2}}$, 
but as we have seen here, 
it does not possess the Painlev\'e property in usual sense. 
This phenomenon was first observed in \cite{DK14}. 
% This phenomenon is one that would not have been discovered by only studying second-order Painlev\'e equations. 

Furthermore, $R_{\rm Gar_{9/2}}$ with $\alpha$ 
substituted by the Puiseux series \eqref{eq:PSer-9/2}
also has a finite limit as $t_2 \to b$:
\begin{equation}
\lim_{t_2 \to b} R_{\rm Gar_{9/2}}(x,t_2) = 
x^5 + 3 b x^3 - c_1 x^2 + \frac{8748 b^2 + 14553 c_2}{3969} x - \frac{4347 b c_1 - 7007 c_3}{3969}.
\end{equation}
This means that $SR_{\rm Gar_{9/2}}$ also admits 
the second singularity reduction as $t_2 \to b$, 
and the classical limit of the reduced Schr\"odinger-type ODE 
defines a hyper-elliptic curve of genus 2 if we choose generic $(b, c_1, c_2, c_3)$.

\begin{rem} \label{rem:another-Laurent-9/2}
The details are omitted here, but through a case-by-case analysis, 
we find that there are several possible pole orders and coefficients of initial term 
in the Laurent series solutions of $\rm Gar_{9/2}$. 
The above Laurent series solution \eqref{eq:LaurenGar9/2q1}--\eqref{eq:LaurenGar9/2p2} 
is one of the choice.  
There is another Laurent series solution of ${\rm Gar}_{9/2}$ of the form 
\begin{subequations}
\begin{align}
q_1(t_1) & = 
\frac{9 \hbar^5}{(t_1 - \alpha)^5} 
+ \frac{\alpha (t_1 - \alpha)}{21 \hbar} 
+ \frac{(t_1 - \alpha)^2}{20 \hbar} 
+ \frac{\beta (t_1 - \alpha)^3}{\hbar^3} 
+ \frac{\gamma (t_1 - \alpha)^5}{\hbar^5}
% \notag \\
% & \quad  
% + \frac{29 t_2 (t_1 - \alpha)^6}{1050 \hbar^5}
+ O((t_1-\alpha)^6),
\label{eq:LaurenGar9/2q1alt}\\[+.3em]
q_2(t_1) & = 
\frac{3 \hbar^2}{(t_1 - \alpha)^2} 
- \frac{3 t_2 (t_1 - \alpha)^2}{35 \hbar^2} 
- \frac{\alpha (t_1 - \alpha)^4}{63 \hbar^4} 
- \frac{(t_1 - \alpha)^5}{30 \hbar^4} 
% + \frac{(18 t_2^2 + 1225 \beta) (t_1 - \alpha)^6}{3675 \hbar^6} 
% + \frac{(2 t_2 \alpha + 245 \gamma) (t_1 - \alpha)^8}{5880 \hbar^8} 
% + \frac{t_2 (t_1 - \alpha)^9}{882 \hbar^8}
+ O((t_1-\alpha)^6),
\\[+.3em]
p_1(t_1) & = 
\frac{3 \hbar^2}{(t_1 - \alpha)^2} 
- \frac{3 t_2 (t_1 - \alpha)^2}{35 \hbar^2} 
- \frac{\alpha (t_1 - \alpha)^4}{63 \hbar^4} 
- \frac{(t_1 - \alpha)^5}{30 \hbar^4} 
% + \frac{(18 t_2^2 + 1225 \beta) (t_1 - \alpha)^6}{3675 \hbar^6} 
% \notag \\
% & \quad  
% + \frac{(2 t_2 \alpha + 245 \gamma) (t_1 - \alpha)^8}{5880 \hbar^8} 
% + \frac{t_2 (t_1 - \alpha)^9}{882 \hbar^8}
+ O((t_1-\alpha)^6), 
\\[+.3em]
p_2(t_1) & = 
-\frac{9 \hbar^4}{(t_1 - \alpha)^4} 
+ \frac{8 t_2}{35} 
+ \frac{2 \alpha (t_1 - \alpha)^2}{21 \hbar^2} 
+ \frac{2 (t_1 - \alpha)^3}{15 \hbar^2} 
% - \frac{(9 t_2^2 + 245 \beta)(t_1 - \alpha)^4}{490 \hbar^4} 
% + \frac{(-2 t_2 \alpha + 175 \gamma)(t_1 - \alpha)^6}{840 \hbar^6} 
% + \frac{2 t_2 (t_1 - \alpha)^7}{1225 \hbar^6}
+ O((t_1-\alpha)^4),
\label{eq:LaurenGar9/2p2alt}
\end{align}
\end{subequations}
which contains three free parameters $(\alpha, \beta, \gamma)$.
See \cite[Section 3.1]{Nak17} for a related computation in autonomous case. 
We can verify that the above Laurent series solution also  provides 
the singularity reduction of the isomonodromy system for ${\rm Gar_{9/2}}$:
\begin{equation}
\lim_{t_1 \to \alpha} Q_{\rm Gar_{9/2}}
= x^5 + 3 t_2 x^3 - \alpha x^2 
+ \frac{9(153 t_2^2 + 2695 \beta) x}{490} 
- \frac{3(134 \alpha t_2 - 5005 \gamma)}{280}.
\end{equation}
However, in this case, a result similar to Theorem \ref{thm:second-IM-9/2} 
cannot be obtained using the same method. 
Even if the above Laurent series \eqref{eq:LaurenGar9/2q1alt}--\eqref{eq:LaurenGar9/2p2alt}
are substituted into the Hamiltonian system with 
$t_2$ as the time variable, inconsistencies arise in the differential relations 
for the parameters $(\alpha, \beta, \gamma)$, and a system of equations like 
\eqref{eq:SP-9/2-1}--\eqref{eq:SP-9/2-4} cannot be obtained. 
It may be a meaningful problem to consider which Laurent series solution 
could yield results similar to Theorem \ref{thm:second-IM-9/2}. 
\end{rem}

%%%%%%%%%%%%%%%%%%%%%%%%%%%%%%%%%%%%%%%%%%%%%%%%%%%%%%%%%%%%%%%%%%%%%%
\subsection{Analysis of ${\rm Gar_{5/2 + 3/2}}$}

In this section, we will show that, by applying the method described 
in the previous section to ${\rm Gar_{5/2+3/2}}$, 
results parallel to those of \cite{DK14} can be obtained.
% {\color{red} 
Unlike the case of $\rm Gar_{9/2}$, the Hamiltonians  \eqref{eq:Ham1-Gar-5/2}--\eqref{eq:Ham2-Gar-5/2} for ${\rm Gar_{5/2+3/2}}$ are rational functions. Therefore, in this section, we assume that $t_2$ and $q_2$ take nonzero values.
% }
% \sout{\color{red} One of the main difference from ${\rm Gar_{9/2}}$ is that we cannot take $q_2 = 0$ as an initial condition since the Hamiltonians \eqref{eq:Ham1-Gar-5/2}--\eqref{eq:Ham2-Gar-5/2} are singular at the point. }
% Strategy of proofs of the following claims are essentially the same as the case of ${\rm Gar_{9/2}}$, we omit most of them.

%%%%%%%%%%%%%%%%%%%%%%%%%%%%%%%%%%%%%%%%%%%%%%%%%%%%%%%%%%%%%%%%%%%%%%
\subsubsection{Laurent series solution of ${\rm Gar}_{5/2 + 3/2}$}
We begin by deriving the Laurent series solution for the Garnier system of type $5/2+3/2$, 
similarly to Section \ref{subsec:Laurent-9/2}. 

\begin{prop} \label{prop:Lauren-5/2}
When considering the Garnier system of type $5/2+3/2$ with $t_2$ restricted to a constant 
and treating it as as a system of ODEs % an ordinary differential equation 
with respect to $t_1$, 
there exists a convergent Laurent series solution of the following form:
\begin{subequations}
\begin{align}
q_1(t_1) & = 
\frac{\hbar}{t_1 - \alpha} - \frac{(\alpha - 2\beta)(t_1 - \alpha)}{3\hbar} 
+ \frac{\gamma (t_1 - \alpha)^2}{\hbar^2} 
+ \frac{\delta (t_1 - \alpha)^3}{\hbar^3} 
\notag \\[+.3em]
& \quad 
- \frac{(3\alpha \gamma - 10\beta \gamma + \alpha \hbar - 2\beta \hbar)(t_1 - \alpha)^4}{9\hbar^4}
+ O((t_1-\alpha)^5),
\label{eq:LaurenGar5/2q1}
\\[+.5em] %%%
q_2(t_1) & = \frac{\beta \hbar^2}{(t_1 - \alpha)^2} + \frac{1}{3}(\alpha \beta - 2 \beta^2) 
- \frac{2 \beta \gamma (t_1 - \alpha)}{3 \hbar} 
+ \frac{(\alpha^2 \beta - 4 \alpha \beta^2 + 4 \beta^3 - 9 \beta \delta)(t_1 - \alpha)^2}{18 \hbar^2}
\notag \\[+.3em]
& \quad 
+ \frac{2 (-2 \alpha \beta \gamma + \alpha \beta \hbar - 2 \beta^2 \hbar)(t_1 - \alpha)^3}{45 \hbar^3}
+ O((t_1-\alpha)^4),
\label{eq:LaurenGar5/2q2}
\\[+.5em] %%%
p_1(t_1) & = \beta + \left(\frac{1}{2} + \frac{2 \gamma}{\hbar} \right)(t_1 - \alpha) 
+ \frac{(\alpha^2 - 4 \alpha \beta + 4 \beta^2 + 45 \delta)(t_1 - \alpha)^2}{18 \hbar^2} 
\notag \\[+.3em]
& \quad
- \frac{(4 \alpha \gamma - 12 \beta \gamma + \alpha \hbar - 2 \beta \hbar)(t_1 - \alpha)^3}{3 \hbar^3}
+ O((t_1-\alpha)^4),
\label{eq:LaurenGar5/2p1}
\\[+.5em] %%%
p_2(t_1) & = -\frac{(t_1 - \alpha)}{\hbar} 
- \frac{(4 \gamma + \hbar)(t_1 - \alpha)^2}{4 \beta \hbar^2} 
+ \frac{2(\alpha - 2 \beta)(t_1 - \alpha)^3}{3 \hbar^3}
\notag \\[+.3em]
& \quad
+ \frac{(4 \alpha \gamma - 20 \beta \gamma + \alpha \hbar - 2 \beta \hbar)(t_1 - \alpha)^4}{12 \beta \hbar^4}
+ O((t_1-\alpha)^5),
\label{eq:LaurenGar5/2p2}
\end{align}
\end{subequations}
with free parameters $(\alpha, \beta, \gamma, \delta)$ independent of $t_1$, 
but we assume $\beta \ne 0$. 
Namely, the restriction of the Garnier system of type $5/2+3/2$ 
passes the Painlev\'e test. 
\end{prop}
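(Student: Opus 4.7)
The plan is to prove existence and convergence of the Laurent series by direct substitution into the Hamiltonian system in the $t_1$-direction, exactly in the spirit of Section \ref{subsec:PIVtest}. From the Hamiltonian $H_1$ in \eqref{eq:Ham1-Gar-5/2}, the flow reads
\begin{align*}
\hbar\, q_1' &= 2 p_1 - q_1^2 - t_1,
& \hbar\, p_1' &= 2 q_1 p_1 + 2 p_2 q_2, \\
\hbar\, q_2' &= - 2 q_1 q_2,
& \hbar\, p_2' &= 2 p_2 q_1 + 1 - t_2/q_2^2,
\end{align*}
with $' = d/dt_1$ and $t_2$ held constant. The rational term $t_2/q_2^2$ will be harmless near $t_1=\alpha$ precisely because of the double pole of $q_2$ there, and this is what forces the assumption $\beta \ne 0$.

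First I would perform a dominant-balance analysis. The ansatz $q_1 \sim a (t_1-\alpha)^{-m}$ in the first equation immediately forces $m=1$ and $a = \hbar$; the third equation then fixes the pole order of $q_2$ at $2$ with leading coefficient $\beta \hbar^2$ for a free nonzero $\beta$; finally the last two equations yield $p_1 \to \beta$ and $p_2 \sim -(t_1-\alpha)/\hbar$. This confirms the pole structure displayed in \eqref{eq:LaurenGar5/2q1}--\eqref{eq:LaurenGar5/2p2}. Next, I would substitute the full Laurent ansatz
\[
q_1 = \sum_{k\ge -1} a_k (t_1-\alpha)^k, \quad q_2 = \sum_{k\ge -2} b_k (t_1-\alpha)^k, \quad p_1 = \sum_{k \ge 0} c_k (t_1-\alpha)^k, \quad p_2 = \sum_{k\ge 1} d_k (t_1-\alpha)^k,
\]
equate coefficients of $(t_1-\alpha)^k$ order by order, and obtain at each order a linear system for $(a_k,b_k,c_k,d_k)$ governed by a Kowalewski-type matrix $K(k)$ depending polynomially on $k$. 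Computing $\det K(k)$, I would locate its resonances; since the equation has order $4$, I expect exactly three nontrivial resonances in addition to the trivial freedom of $\alpha$, accommodating the three additional parameters $\beta$ (at $k=-2$ in $q_2$), and $\gamma,\delta$ (which enter $q_1$ at $k=2,3$, cross-propagated to the other unknowns as observed in the stated formulas). At each resonance one must verify the Fredholm compatibility condition---that the inhomogeneous term built from lower-order data lies in the range of $K(k)$---so that the new free parameter can be introduced without obstruction.

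Once the formal series is constructed, convergence on a punctured neighborhood of $\alpha$ follows from a standard Cauchy majorant argument: substituting $q_1 = \hbar/(t_1-\alpha) + \tilde q_1$, $q_2 = \beta\hbar^2/(t_1-\alpha)^2 + \cdots + \tilde q_2$, and similar regularizations for $p_1,p_2$, the system becomes an analytic ODE near $t_1=\alpha$ with holomorphic right-hand side (using $\beta \ne 0$ to expand $1/q_2^2$), to which Cauchy's existence theorem applies with the free parameters encoding initial data.

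The main obstacle will be the resonance-compatibility verification at the three nontrivial resonances: the presence of the rational term $t_2/q_2^2$ makes the recursion more combinatorially involved than in a purely polynomial Hamiltonian system, so explicit algebra is unavoidable at each step. A useful practical shortcut, and one consistent with the paper's claim of an ``elementary method'', is to compute the Laurent series to order sufficient to exhibit all four free parameters explicitly and verify that no further constraints appear at higher resonances; the clean cancellations visible in the stated initial segments of $(q_1,q_2,p_1,p_2)$ strongly suggest that this direct verification goes through.
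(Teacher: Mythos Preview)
Your treatment of the formal part---dominant balance, recursive determination of coefficients, identification of the resonances where $\beta,\gamma,\delta$ enter---matches the paper's ``direct computation'' and is fine.

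The convergence argument, however, does not go through as stated. If you merely subtract the principal parts, writing $q_1 = \hbar/(t_1-\alpha) + \tilde q_1$ etc., the resulting system for the tildes is \emph{not} analytic at $t_1=\alpha$: the quadratic term $q_1^2$ produces a cross term $2\hbar\tilde q_1/(t_1-\alpha)$, and similarly $q_1 q_2$, $q_1 p_2$ generate $1/(t_1-\alpha)$ coefficients. You end up with a Fuchsian (regular singular) system, not a regular one, so Cauchy's existence theorem does not apply directly; you would need a separate majorant or fixed-point argument adapted to the Fuchsian case, and the resonances you already found are exactly the obstructions to doing this naively.

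The paper instead uses the ``mirror system'' device of Hu--Yan \cite{HY00}: an explicit \emph{nonlinear} rational change of variables
\[
(\xi_1,\xi_2,\xi_3,\xi_4)=\Bigl(\tfrac{1}{q_1},\ \tfrac{q_2}{q_1^2},\ 4q_2p_2+\tfrac{4q_2}{q_1},\ p_1 q_1^2+q_2+2p_2 q_1 q_2\Bigr)
\]
under which the $t_1$-flow becomes a system whose right-hand side is genuinely holomorphic at $t_1=\alpha$ (the only denominator is $\xi_2$, and $\xi_2(\alpha)=\beta\ne 0$). The Laurent solution corresponds to a finite initial value for $(\xi_1,\dots,\xi_4)$, and Cauchy's theorem now applies without further work. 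Finding such a birational regularizing transformation is the nontrivial step your outline is missing; once you have it, convergence is immediate.
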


\begin{proof}
We can verify that there exists Laurent series of the form 
\eqref{eq:LaurenGar5/2q1}--\eqref{eq:LaurenGar5/2p2}
including the free parameters and 
satisfing ${\rm Gar}_{5/2+3/2}$ by direct computation. 
To prove the convergence of the Laurent series 
obtained from the Painlev\'e test, 
we follow a method of \cite{HY00}. 
If we take new unknown functions as 
\begin{equation} \label{eq:mirror-transform}
(\xi_1, \xi_2, \xi_3, \xi_4) = 
\left( \frac{1}{q_1}, 
\frac{q_2}{q_1^2}, 
4 q_2 p_2 + \frac{4q_2}{q_1}, 
p_1q_1^2+q_2+2p_2 q_1 q_2
\right),
\end{equation}
then the restriction of ${\rm Gar_{5/2+3/2}}$ is transformed to 
\begin{subequations}
\begin{align}
\hbar \frac{\partial \xi_1}{\partial t_1} & = 
1 + t_1 \xi_{1 }^2 - 2 \xi_{1 }^2 \xi_{2 } 
+ \xi_{1 }^3 \xi_{3 } - 2 \xi_{1 }^4 \xi_{4 }, 
\label{eq:mirror-5/2-1}\\[+.3em]
\hbar \frac{\partial \xi_2}{\partial t_1} & = 
2 t_1 \xi_{1 } \xi_{2 } - 4 \xi_{1 } \xi_{2 }^2 
+ 2 \xi_{1 }^2 \xi_{2 } \xi_{3 } 
- 4 \xi_{1 }^3 \xi_{2 } \xi_{4 }, 
\\[+.3em]
\hbar \frac{\partial \xi_3}{\partial t_1} & = 
4 t_1 \xi_{2 } - 8 \xi_{2 }^2 
+ 4 \xi_{1 } \xi_{2 } \xi_{3 } 
- 8 \xi_{1 }^2 \xi_{2 } \xi_{4 }
-\frac{4 t_2 \xi_{1 }^2}{\xi_{2 }}, 
\\[+.3em]
\hbar \frac{\partial \xi_4}{\partial t_1} & =  
\frac{t_1 \xi_{3 }}{2} + \frac{\xi_{1 } \xi_{3 }^2}{2} - \xi_{2 } \xi_{3 } 
- 2 t_1 \xi_{1 } \xi_{4 } + 4 \xi_{1 } \xi_{2 } \xi_{4 } 
- 3 \xi_{1 }^2 \xi_{3 } \xi_{4 } + 4 \xi_{1 }^3 \xi_{4 }^2 
-\frac{2 t_2 \xi_{1 }}{\xi_{2 }}.
\label{eq:mirror-5/2-4}
\end{align}
\end{subequations}
The Laurent series solution \eqref{eq:LaurenGar5/2q1}--\eqref{eq:LaurenGar5/2p2}
corresponds to the solution of the system \eqref{eq:mirror-5/2-1}--\eqref{eq:mirror-5/2-4} 
at $t_1 = \alpha$ with the initial value
\begin{equation} \label{eq:inicond}
(\xi_1(\alpha), \xi_2(\alpha), \xi_3(\alpha), \xi_4(\alpha)) = 
\left(0, \beta, - 4 \gamma - \hbar, 
\frac{\alpha^2 + 14 \alpha \beta - 32 \beta^2 + 45 \delta}{18} \right).
\end{equation}
Since we have assumed $\beta \ne 0$, we can apply the Cauchy existence theorem, 
which implies that the solution of the system \eqref{eq:mirror-5/2-1}--\eqref{eq:mirror-5/2-4}  
with the initial condition \eqref{eq:inicond} is holomorphic around $t_1 = \alpha$. 
It further guarantees the convergence of the Laurent series 
\eqref{eq:LaurenGar5/2q1}--\eqref{eq:LaurenGar5/2p2}. 
This completes the proof.
\end{proof}

%%%%%%%%%%%%%%%%%%%%%%%%%%%%%%%%%%%%%%%%%%%%%%%%%%%%%%%%%%%%%%%%%%%%%%
\subsubsection{Existence of singularity reduction $SR_{\rm Gar_{5/2+3/2}}$}
% {Singularity reduction of the isomonodromy system associated with ${\rm Gar}_{5/2 + 3/3}$}
% [Singularity reduction of the isomonodromy system associated \\ with ${\rm Gar}_{5/2 + 3/3}$]

Recall that $Q_{\rm Gar_{5/2+3/2}}$ and $A_{{\rm Gar_{5/2+3/2}},j}$
are defined in \eqref{eq:Sch-pot} and \eqref{eq:GarAj}
obtained from the ismonodormy system associated with ${\rm Gar}_{5/2+3/2}$. 
The following claim on the existence of the 
singularity reduction is an analogue of Theorem \ref{thm:SR-Gar9/2}.

\begin{thm}% [{c.f., \cite[Theorem 3.1]{DK14}}] 
\label{thm:SR-Gar5/2} 
The functions $Q_{\rm Gar_{5/2+3/2}}$ and $A_{{\rm Gar_{5/2+3/2}},2}$,
with $(q_1,q_2,p_1,p_2)$ substituted by the Laurent series solution 
\eqref{eq:LaurenGar5/2q1}--\eqref{eq:LaurenGar5/2p2}
of the Hamiltonian system \eqref{eq:GarHam}, %  equivalent to the Garnier system of type $9/2$, 
have finite limits as $t_1 \to \alpha$: 
\begin{align}
\lim_{t_1 \to \alpha} Q_{\rm Gar_{5/2+3/2}}(x,t_1, t_2)  
& = R_{\rm Gar_{5/2+3/2}}(x,t_2) \notag \\
& = R_{\rm Gar_{5/2+3/2},0}(x,t_2) 
+ \hbar R_{{\rm Gar_{5/2+3/2}},1}(x,t_2) 
+ \hbar^2 R_{{\rm Gar_{5/2+3/2}},2}(x,t_2),
\notag \\[+.3em]
& 
\label{eq:lim-QGar5/2}
\\
\lim_{t_1 \to \alpha} A_{{\rm Gar_{5/2+3/2}},2}(x,t_1, t_2) 
&  = B_{\rm Gar_{5/2+3/2}}(x,t_2) = \frac{\beta x}{t_2(x- \beta)}, 
\label{eq:BGar5/2}
\end{align}
where 
\begin{subequations}
\begin{align}
R_{\rm Gar_{5/2+3/2},0} & = 
x - \alpha + \frac{\alpha^2 + 32 \alpha \beta - 50 \beta^2 + 45 \delta}{18 x} 
\notag \\[+.5em]
& \quad 
- \frac{18 t_2 + \beta (\alpha^2 \beta + 14 \alpha \beta^2 
- 32 \beta^3 - 18 \gamma^2 + 45 \beta \delta)}{18 \beta x^2}
+ \frac{t_2}{x^3}, 
& \quad 
\label{eq:RG5/2-0}\\[+.5em]
R_{\rm Gar_{5/2+3/2},1} & = \frac{(3 x - \beta) \gamma}{2 x^2 (x - \beta)},
\label{eq:RG5/2-1} \\[+.5em]
R_{\rm Gar_{5/2+3/2},2} & = \frac{5 x^2 + 10 x \beta - 3 \beta^2}{16 x^2 (x - \beta)^2}.
\label{eq:RG5/2-2}
\end{align}
\end{subequations}
\end{thm}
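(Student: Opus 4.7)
The plan is parallel to Theorem \ref{thm:SR-Gar9/2}: reduce the claim to a direct substitution and a power-series bookkeeping argument. Concretely, I would express $Q_{\mathrm{Gar}_{5/2+3/2}}$ and $A_{\mathrm{Gar}_{5/2+3/2},2}$ in terms of the canonical variables $(q_1,q_2,p_1,p_2)$, plug in the Laurent expansions \eqref{eq:LaurenGar5/2q1}--\eqref{eq:LaurenGar5/2p2}, and check that the divergent contributions as $t_1\to\alpha$ cancel, leaving the finite limits asserted in the theorem.

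First I would handle $A_{\mathrm{Gar}_{5/2+3/2},2}$, which is the easier of the two. From \eqref{eq:GarAj} and the matrices $L_{\mathrm{Gar}_{5/2+3/2}}$, $M_{\mathrm{Gar}_{5/2+3/2},2}$ of Section \ref{sec:garnier} one reads off $M_{2,12}=-q_2/t_2$ and $L_{12}=x+(p_1-q_1^2-t_1)+q_2/x$, so
\begin{equation*}
A_{\mathrm{Gar}_{5/2+3/2},2}
=\frac{-q_2\,x}{t_2\bigl(x^2+(p_1-q_1^2-t_1)\,x+q_2\bigr)}.
\end{equation*}
The Laurent series give $q_2=\beta\hbar^2(t_1-\alpha)^{-2}+O(1)$ and $q_1^2=\hbar^2(t_1-\alpha)^{-2}+O(1)$, while $p_1$ stays bounded, so numerator and denominator both have leading singularity of order $(t_1-\alpha)^{-2}$. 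Their leading parts are $-\beta\hbar^2 x$ and $-\hbar^2(x-\beta)$ respectively, yielding the claimed limit $\beta x/(t_2(x-\beta))$.

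For $Q_{\mathrm{Gar}_{5/2+3/2}}$ I would use the formula \eqref{eq:Sch-pot}. A convenient simplification is that $L_{11}+L_{22}=0$ for $L_{\mathrm{Gar}_{5/2+3/2}}$, so $P_1=-\hbar\,(\partial_x L_{12})/L_{12}$ is of order $\hbar$, and $Q_{\mathrm{Gar}_{5/2+3/2}}$ decomposes automatically into an $\hbar$-independent piece coming from $L_{11}L_{22}-L_{12}L_{21}$, an $O(\hbar)$ piece coming from the remaining contributions to $P_2$, and an $O(\hbar^2)$ piece from $P_1^2/4+(\hbar/2)\,\partial_x P_1$. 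This matches the decomposition $R_0+\hbar R_1+\hbar^2 R_2$ asserted in the theorem. I would then substitute the Laurent series into each piece, collect powers of $(t_1-\alpha)$, and verify that all negative powers cancel while the constant term in $(t_1-\alpha)$ reproduces the rational functions \eqref{eq:RG5/2-0}--\eqref{eq:RG5/2-2}.

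The main obstacle is purely computational: cancellation of the $(t_1-\alpha)^{-k}$ poles requires using several orders of the Laurent expansion simultaneously, since $q_1$ and $q_2$ have different pole orders and enter $L_{12}^{-1}$ and $L_{21}$ in a nonlinear way, and one must also keep track of the apparent pole at $x=\beta$ in the limiting potential (which corresponds to the zero of $L_{12}$ surviving the reduction). The Painlev\'e-test analysis of Proposition \ref{prop:Lauren-5/2} already guarantees that enough coefficients of $(q_1,q_2,p_1,p_2)$ are consistently determined for the cancellation to go through, so in practice one only has to carry the expansion to a fixed finite order and verify the resulting polynomial identity in $x,\alpha,\beta,\gamma,\delta,t_2$. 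No conceptual new ingredient beyond careful bookkeeping is required.
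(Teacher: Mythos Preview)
Your proposal is correct and is exactly the paper's approach: the paper's proof is a one-line ``It can be checked by a straightforward computation using \eqref{eq:LaurenGar5/2q1}--\eqref{eq:LaurenGar5/2p2}'', and you have simply spelled out how that computation is organized. Your additional observation that $L_{11}+L_{22}=0$ forces $P_1$ to be of order~$\hbar$, thereby explaining a priori the $R_0+\hbar R_1+\hbar^2 R_2$ decomposition, is a nice structural point that the paper does not make explicit but is entirely compatible with its argument.
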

\begin{proof}
% The existence of the limit 
It can be checked by 
a straightforward computation 
using \eqref{eq:LaurenGar5/2q1}--\eqref{eq:LaurenGar5/2p2}. 
\end{proof}

Theorem \ref{thm:SR-Gar5/2} implies that, 
when taking the limit $t_1 \to \alpha$, 
the singularity reduction
\begin{equation} \label{eq:SR-Gar5/2}
SR_{\rm Gar_{5/2+3/2}}~:~ 
\left( \hbar^2 \frac{\partial^2}{\partial x^2} - R_{\rm Gar_{5/2+3/2}}(x, t_2)  \right)  \psi = 0
\end{equation}
of the isomonodromy system associated with ${\rm Gar}_{5/2+3/2}$ exists.
As well as the case of ${\rm Gar_{9/2}}$, 
for generic choice of the free parameters $(\alpha,\beta,\gamma,\delta)$,
the classical limit 
\begin{equation}
y^2 = R_{\rm Gar_{5/2+3/2},0}(x, t_2)
\label{eq:CL-Gar5/2}
\end{equation}
of $SR_{\rm Gar_{5/2+3/2}}$ defines a family of hyper-elliptic curves of genus 2 parametrized by $t_2$.

%%%%%%%%%%%%%%%%%%%%%%%%%%%%%%%%%%%%%%%%%%%%%%%%%%%%%%%%%%%%%%%%%%%%%%
\subsubsection{Isomonodromy property of $SR_{\rm Gar_{5/2+3/2}}$  and quasi-Painlev\'e property}

It follows from \eqref{eq:RG5/2-0}--\eqref{eq:RG5/2-2} that 
the the regular singular point $x = \beta$ of
the Schr\"odinger-type equation $SR_{\rm Gar_{5/2+3/2}}$  % \eqref{eq:SR-Gar9/2} 
is an apparent singular point. 
Furthermore, we can show the following result 
which is similar to Theorem \ref{thm:second-IM-9/2}.

\begin{thm} 
\label{thm:second-IM-5/2}
If the parameters $(\alpha,\beta,\gamma,\delta)$ are functions of $t_2$ satisfying 
the system of ODEs
\begin{subequations}
\begin{align}
\hbar \frac{d\alpha}{dt_2} & = -\frac{\beta\hbar}{t_2},
\label{eq:SP-5/2-1}
\\[+.5em]
\hbar \frac{d\beta}{dt_2} & = -\frac{\beta (4 \gamma + \hbar)}{2 t_2},
\\[+.5em]
\hbar \frac{d\gamma}{dt_2} & = \frac{18 t_2 - \alpha^2 \beta^2 + 4 \alpha \beta^3 
- 4 \beta^4 - 45 \beta^2 \delta}{18 t_2 \beta},
\\[+.5em]
\hbar \frac{d\delta}{dt_2} & = \frac{2(32 \alpha \beta \gamma 
- 100 \beta^2 \gamma + 9 \alpha \beta \hbar - 18 \beta^2 \hbar)}{45t_2},
\label{eq:SP-5/2-4}
\end{align}
\end{subequations}
then $R = R_{\rm Gar_{5/2+3/2}}$ satisfies the equation \eqref{eq:comp-SR-9/2}
with the choice $B = B_{\rm Gar_{5/2+3/2}}$ given in \eqref{eq:BGar5/2}. 
Therefore, $SR_{\rm Gar_{5/2+3/2}}$  % \eqref{eq:SR-Gar9/2} 
with $(\alpha, \beta, \gamma, \delta)$ 
satisfying \eqref{eq:SP-5/2-1}--\eqref{eq:SP-5/2-4} 
is compatible with the PDE 
\begin{equation} \label{eq:Lax-SR-5/2}
\frac{\partial\psi}{\partial t_2}=
    \left( B_{\rm Gar_{5/2+3/2}} \frac{\partial}{\partial x} - 
    \frac{1}{2} \frac{\partial B_{\rm Gar_{5/2+3/2}}}{\partial x} \right) \psi,
\end{equation}
and thus $SR_{\rm Gar_{5/2+3/2}}$  forms an isomonodromic family of linear ODEs parametrized by $t_2$.
\end{thm}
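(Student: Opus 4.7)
The plan is to verify the compatibility condition \eqref{eq:comp-SR-9/2} by direct computation, exactly in parallel with the sketch indicated for Theorem \ref{thm:second-IM-9/2}. Since the formulas \eqref{eq:RG5/2-0}--\eqref{eq:RG5/2-2} for the three components of $R_{\rm Gar_{5/2+3/2}}$ and \eqref{eq:BGar5/2} for $B_{\rm Gar_{5/2+3/2}}$ are explicit rational functions of $x$, the verification reduces to checking a collection of polynomial identities.

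First I would decompose the compatibility equation according to powers of $\hbar$. Writing $R = R_0 + \hbar R_1 + \hbar^2 R_2$ with $R_i = R_{\rm Gar_{5/2+3/2},i}$ and noting that $B$ is $\hbar$-free, the single equation \eqref{eq:comp-SR-9/2} splits into three independent conditions
\begin{gather*}
2\frac{\partial R_0}{\partial t_2} - 4 R_0 \frac{\partial B}{\partial x} - 2 B \frac{\partial R_0}{\partial x} = 0, \\
2\frac{\partial R_1}{\partial t_2} - 4 R_1 \frac{\partial B}{\partial x} - 2 B \frac{\partial R_1}{\partial x} = 0, \\
2\frac{\partial R_2}{\partial t_2} + \frac{\partial^3 B}{\partial x^3} - 4 R_2 \frac{\partial B}{\partial x} - 2 B \frac{\partial R_2}{\partial x} = 0,
\end{gather*}
each of which may be treated separately.

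Next I would express $\partial R_i/\partial t_2$ via the chain rule through the parameters, $\partial R_i/\partial t_2 = \sum_u (\partial R_i/\partial u)\, u'$ where $u$ runs over $\alpha, \beta, \gamma, \delta$, and substitute the ODE system \eqref{eq:SP-5/2-1}--\eqref{eq:SP-5/2-4} for the derivatives $\alpha', \beta', \gamma', \delta'$. After this substitution, both sides of each of the three conditions become explicit rational functions of $x$ whose only poles lie at $x = 0$ and $x = \beta$. Clearing denominators yields polynomial identities in $x$, which one verifies by matching coefficients; this amounts to a finite (albeit tedious) computation in the field $\mathbb{Q}(\alpha, \beta, \gamma, \delta, t_2, \hbar)$.

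The main obstacle is the length of the $\hbar^0$ identity, since $R_0$ involves $\alpha, \beta, \gamma^2, \delta, t_2$ together with a non-polynomial dependence through $1/\beta$ in the coefficient of $1/x^2$; careful bookkeeping is required to see that the $\delta$-equation \eqref{eq:SP-5/2-4} is precisely what cancels the $O(x^{-2})$ part. From the other direction, as described in Remark \ref{rem:finding-secondary-isomonodromy}, the system \eqref{eq:SP-5/2-1}--\eqref{eq:SP-5/2-4} can itself be \emph{discovered} by treating $\alpha', \beta', \gamma', \delta'$ as unknowns, reading off one linear equation per coefficient in the Laurent expansions at $x = 0$ and at $x = \beta$ (after matching $\hbar$-powers), and observing that the resulting over-determined linear system admits exactly the stated solution. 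The consistency of this over-determined system is the essential content of the theorem; once it is checked, the final assertion that $SR_{\rm Gar_{5/2+3/2}}$ is compatible with \eqref{eq:Lax-SR-5/2} follows immediately from the general Schr\"odinger-type isomonodromy criterion \eqref{eq:comp}.
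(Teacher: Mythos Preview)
Your overall plan --- verify \eqref{eq:comp-SR-9/2} by direct substitution --- is exactly what the paper does (its proof is the single sentence ``can also be proved by a straightforward calculation''), so in spirit you are aligned with the paper.  There is, however, a genuine gap in the way you organize the computation.

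The $\hbar$-decomposition into three ``independent'' conditions is not valid here, because the ODE system \eqref{eq:SP-5/2-1}--\eqref{eq:SP-5/2-4} itself carries $\hbar$: solving for the derivatives gives
\[
\beta' = -\frac{2\beta\gamma}{t_2\hbar} - \frac{\beta}{2t_2},\qquad
\gamma' = \frac{1}{\hbar}\,\frac{18t_2 - \alpha^2\beta^2 + \cdots}{18 t_2 \beta},\qquad
\delta' = \frac{1}{\hbar}\,\frac{2(32\alpha\beta\gamma - 100\beta^2\gamma)}{45t_2} + \cdots,
\]
so after substitution each $\partial R_i/\partial t_2$ picks up $1/\hbar$-terms.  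Concretely, your third condition reads
\(
2(\partial_\beta R_2)\beta' + \partial_x^3 B - 4R_2\,\partial_x B - 2B\,\partial_x R_2 = 0;
\)
the last three terms are $\hbar$-free, while $(\partial_\beta R_2)\beta'$ contributes $-\tfrac{4\beta\gamma}{t_2\hbar}\,\partial_\beta R_2$, which is nonzero since $R_2$ genuinely depends on $\beta$.  Thus the three graded pieces do \emph{not} vanish separately; the $1/\hbar$-excess in the $\hbar^2$-piece, once multiplied back by $\hbar^2$, lands at order $\hbar^1$ and cancels against the corresponding excess from the $\hbar^1$-piece, and so on.  The remedy is simply to substitute the ODEs into the full left-hand side of \eqref{eq:comp-SR-9/2} first and only then collect powers of $\hbar$ (four graded identities, from $\hbar^{-1}$ to $\hbar^{2}$), or not to decompose at all.

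A smaller point: your chain-rule formula $\partial R_i/\partial t_2 = \sum_u(\partial R_i/\partial u)\,u'$ omits the explicit $t_2$-dependence of $R_0$ (via the $18t_2$ in the $x^{-2}$-coefficient and the $t_2/x^3$ term in \eqref{eq:RG5/2-0}) and of $B$ (via the prefactor $1/t_2$); these must be included for the identity to close.
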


This statement can also be proved by a straightforward calculation.
The system \eqref{eq:SP-5/2-1}--\eqref{eq:SP-5/2-4} can be found 
using the same approach as stated in Remark \ref{rem:finding-secondary-isomonodromy}.
Therefore, $SR_{\rm Gar_{5/2+3/2}}$ can be isomonodromically deformed 
with respect to $t_2$. 
% thanks to \eqref{eq:SP-5/2-1}--\eqref{eq:SP-5/2-4}. 

Similar to the case of ${\rm Gar}_{9/2}$, 
we can derive a fourth-order single ODE 
\begin{align} 
& \hbar^2 \left( \alpha^{(4)} 
+ \frac{2 \alpha^{(3)}}{t_2} 
- \frac{4 \alpha'' \alpha^{(3)}}{\alpha'} 
-\frac{3 (\alpha'')^2}{t_2 \alpha'} 
+ \frac{3 (\alpha'')^3}{(\alpha')^2} 
\right) 
\notag \\[+.3em]
& \quad 
+ \frac{4 \alpha''}{t_2^2 \alpha'} 
+ 12 t_2 (\alpha')^3 \alpha''
+ 4 \alpha \, (\alpha')^2 \alpha'' 
+ \frac{4 \alpha\, (\alpha')^3}{t_2} 
+ 14 (\alpha')^4 
+ \frac{2}{t_2^3} 
\label{eq:4th-ODE-5/2}
\end{align}
from the system \eqref{eq:SP-5/2-1}--\eqref{eq:SP-5/2-4} of ODEs, 
which describes the isomonodromy deformation of $SR_{\rm Gar_{5/2+3/2}}$. 
As well as \eqref{eq:4th-ODE-9/2}, the equation \eqref{eq:4th-ODE-5/2} 
admits a Puiseux series solution of the following form:
\begin{align}
\alpha(t_2) 
& = 
c_1 - \frac{3^{1/3} \hbar^{2/3} (t_2 - b)^{1/3}}{b^{1/3}}
- \frac{c_1 (t_2 - b)}{5 b} 
+ \frac{5 \hbar^{2/3} (t_2 - b)^{4/3}}{4 \cdot 3^{2/3} b^{4/3}} 
+ \frac{c_2 (t_2 - b)^{5/3}}{3^{1/3} b^{5/3} \hbar^{2/3}} 
\notag \\[+.3em]
& \quad 
+ \frac{3 c_1 (t_2 - b)^2}{10 b^2}
+ \frac{c_3 (t_2 - b)^{7/3}}{3^{2/3} b^{7/3} \hbar^{4/3}} 
+ \frac{(81 c_1^2 - 3275 c_2) (t_2 - b)^{8/3}}{1500 \cdot 3^{1/3} b^{8/3} \hbar^{2/3}} 
\notag \\[+.3em]
& \quad 
+ \frac{(3375 b - 27 c_1^4 - 1575 c_1^2 c_2 - 12500 c_2^2 + 450 c_1 (7 c_3 - 5 \hbar^2)) 
(t_2 - b)^3}{12375 b^3 \hbar^2}
\notag \\[+.3em]
& \quad 
+ O((t_2-b)^{{10}/{3}}), 
\label{eq:PSer-5/2}
\end{align}
where $(b, c_1, c_2, c_3)$ can be taken as free parameters.
Consequently, we conclude that the equation \eqref{eq:4th-ODE-5/2} 
describes the isomonodromy deformation of $SR_{\rm Gar_{5/2+3/2}}$, 
but it does not possess the Painlev\'e property.

Furthermore, as well as the case of ${\rm Gar_{9/2}}$, 
$R_{\rm Gar_{5/2+3/2}}$ with $\alpha$ 
substituted by the Puiseux series \eqref{eq:PSer-5/2}
also has a finite limit as $t_2 \to b$:
\begin{align}
& \lim_{t_2 \to b} R_{\rm Gar_{5/2+3/2}}(x,t_2) \notag \\[+.3em] 
& \quad = 
\left( x - c_1 + \frac{180 c_1^2 + 1750 c_2}{450 x} 
-\frac{6 c_1^3 + 175 c_1 c_2 - 175 c_3}{75 x^2}
+ \frac{b}{x^3} \right) + \frac{5\hbar^2}{2x^2}.
\end{align}
This means that $SR_{\rm Gar_{5/2+3/2}}$ also admits 
the second singularity reduction as $t_2 \to b$, 
and the classical limit of the reduced Schr\"odinger-type ODE 
defines a hyper-elliptic curve of genus 2 if we choose generic $(b, c_1, c_2, c_3)$.

In this way, 
we have obtained results parallel to those of Dubrovin--Kapaev \cite{DK14}
for ${\rm Gar}_{5/2+3/2}$. 
As mentioned in Section \ref{sec:intro}, 
it would be interesting to apply a similar analysis to other Garnier systems 
and fourth-order Painlev\'e equations listed in \cite{Kaw17, KNS18}.

% {\color{red} 
\begin{rem}
Similarly to the case of ${\rm Gar}_{9/2}$, there are several possible pole orders 
in the Laurent series solutions of $\rm Gar_{5/2+3/2}$. 
In addition to \eqref{eq:LaurenGar5/2q1}--\eqref{eq:LaurenGar5/2p2}, 
we found another Laurent series solution of ${\rm Gar}_{5/2+3/2}$ of the form 
\begin{subequations}
\begin{align}
q_1(t_1) & = 
-\frac{\hbar}{t_1 - \alpha} 
- \frac{(2 t_2 - \alpha \beta)(t_1 - \alpha)}{3 \beta \hbar} 
- \frac{(4 \beta \gamma - 3 \hbar)(t_1 - \alpha)^2}{4 \hbar^2} 
+ \frac{\delta (t_1 - \alpha)^3}{\hbar^3}
\notag \\[+.1em]
& \quad + O((t_1-\alpha)^4),
\label{eq:LaurenGar5/2q1alt}\\[+.3em]
q_2(t_1) & = 
\frac{\beta (t_1 - \alpha)^2}{\hbar^2} + \frac{(2 t_2 - \alpha \beta)(t_1 - \alpha)^4}{3 \hbar^4} + \frac{\beta (4 \beta \gamma - 3 \hbar)(t_1 - \alpha)^5}{6 \hbar^5}
+ O((t_1-\alpha)^6),
\\[+.3em]
p_1(t_1) & = 
\frac{\hbar^2}{(t_1 - \alpha)^2} 
+ \frac{t_2 + \alpha \beta}{3 \beta} 
+ \frac{t_1 - \alpha}{2} 
+ \frac{(4 t_2^2 - 4 t_2 \alpha \beta + \beta^2 \alpha^2 + 9 \beta^2 \delta)
(t_1 - \alpha)^2}{18 \beta^2 \hbar^2}
\notag \\[+.1em]
& \quad + O((t_1-\alpha)^3), 
\\[+.3em]
p_2(t_1) & 
= \frac{t_2 \hbar^3}{\beta^2 (t_1 - \alpha)^3} 
+ \frac{\gamma \hbar^2}{(t_1 - \alpha)^2} 
+ \frac{4 \alpha \gamma \beta^2 - 12 t_2 \beta \gamma + 3 t_2 \hbar}{12 \beta^2}
% + \frac{(-16 t_2^3 + 16 t_2^2 \alpha \beta + 2 t_2 \beta^2 (-2 \alpha^2 + 9 \delta) + 9 \beta^4 (2 - 4 \beta \gamma^2 + 3 \gamma \hbar))(t_1 - \alpha)}{54 \beta^4 \hbar}
+ O((t_1-\alpha)^1).
\label{eq:LaurenGar5/2p2alt}
\end{align}
\end{subequations}
where we assume $\beta \ne 0$.
In contrast to the case observed in Remark \ref{rem:another-Laurent-9/2}, 
in this case, we obtain a Laurent series that contains four free parameters 
$(\alpha, \beta, \gamma, \delta)$.
We can also verify that the above Laurent series solution provides 
the singularity reduction of the isomonodromy system for ${\rm Gar_{5/2+3/2}}$ 
which is different from \eqref{eq:SR-Gar5/2}: 
\begin{equation} \label{eq:SR-Gar5/2-alt}
\widetilde{SR}_{\rm Gar_{5/2+3/2}}~:~ 
\left( \hbar^2 \frac{\partial^2}{\partial x^2} - \widetilde{R}_{\rm Gar_{5/2+3/2}}(x, t_2)  \right) \psi = 0,
\end{equation}
% is obtained from the Laurent series solution \eqref{eq:LaurenGar5/2q1alt}--\eqref{eq:LaurenGar5/2p2alt}.
where 
\begin{align}
% \lim_{t_1 \to \alpha} Q_{\rm Gar_{5/2+3/2}} 
% \notag \\ & \quad 
\widetilde{R}_{\rm Gar_{5/2+3/2}}(x,t_2) 
& = 
\lim_{t_1 \to \alpha} Q_{\rm Gar_{5/2+3/2}}
 \notag \\[+.3em]
\quad 
& = 
% \left(
\biggl( x - \alpha 
- \frac{50 t_2^2 - 32 t_2 \alpha \beta - \beta^2 \alpha^2 
+ 45 \beta^2 \delta}{18 \beta^2 x} 
\notag \\[+.5em]
& \quad + \frac{32 t_2^3 - 14 t_2^2 \alpha \beta - 18 \beta^4 
+ 18 \beta^5 \gamma^2 - t_2 \beta^2 \alpha^2 + 45 t_2 \beta^2 \delta}{18 \beta^3 x^2}
+ \frac{t_2}{x^3} \biggr)
%  \right) 
\notag \\[+.5em] 
& \quad + \hbar \frac{t_2 \beta \gamma}{x^2(\beta x - t_2)}
- \hbar^2 \frac{\beta(\beta x - 4t_2)}{4x(\beta x - t_2)^2}.  
\label{eq:alt-SR-5/2}
\end{align}
The point $x = t_2/\beta$ is an apparent singular point of 
the singularity reduction \eqref{eq:SR-Gar5/2-alt}. 
Furthermore, in this case, by imposing that the 
Laurent series solution \eqref{eq:LaurenGar5/2q1alt}--\eqref{eq:LaurenGar5/2p2alt} 
also satisfy the Hamiltonian system with respect to $t_2$, 
we can derive the following system of differential equations that 
the free parameters must satisfy:
\begin{subequations}
\begin{align}
\hbar \frac{d\alpha}{dt_2} & = -\frac{\hbar}{\beta},
\label{eq:SP-5/2-1-alt}
\\[+.5em]
\hbar \frac{d\beta}{dt_2} & = \frac{2\beta^2 \gamma}{t_2},
\\[+.5em]
\hbar \frac{d\gamma}{dt_2} & = - \frac{4 t_2^3 - 4 t_2^2 \alpha \beta 
+ t_2 \alpha^2 \beta^2 - 18 \beta^4 
+ 36 \beta^5 \gamma^2 - 45 t_2 \beta^2 \delta}{18 t_2 \beta^4},
\\[+.5em]
\hbar \frac{d\delta}{dt_2} & = 
\frac{200 t_2 \beta \gamma - 114 t_2 \hbar - 64 \alpha \beta^2 \gamma 
+ 30 \alpha \beta \hbar}{45 \beta^2}.
\label{eq:SP-5/2-4-alt}
\end{align}
\end{subequations}
However, we found that the Laurent series solution 
\eqref{eq:LaurenGar5/2q1alt}--\eqref{eq:LaurenGar5/2p2alt}
provides the trivial limit of $A_{{\rm Gar_{5/2+3/2}},2}$ when $t_1 \to \alpha$:
\begin{equation}
\lim_{t_1 \to \alpha} A_{{\rm Gar_{5/2+3/2}},2}(x,t_1, t_2) 
= \widetilde{B}_{\rm Gar_{5/2+3/2}}(x,t_2) = 0,
\end{equation}
and consequently, the compatibility equation \eqref{eq:comp-SR-9/2} 
is not satisfied by the pair 
$(R, B) = (\widetilde{R}_{\rm Gar_{5/2+3/2}}, \widetilde{B}_{\rm Gar_{5/2+3/2}})$.
Thus, although the situation differs from that observed in Remark \ref{rem:another-Laurent-9/2} 
for $\rm Gar_{9/2}$, we have also found that the Laurent series solutions 
\eqref{eq:LaurenGar5/2q1alt}--\eqref{eq:LaurenGar5/2p2alt} 
of ${\rm Gar}_{5/2+3/2}$ does not give an analogue of Theorem \ref{thm:second-IM-5/2}.
As related to the point mentioned at the end of the Remark \ref{rem:another-Laurent-9/2},
our observation suggests that, 
for an analogue of Theorem \ref{thm:second-IM-5/2} to hold, 
additional conditions are required beyond the condition that 
the Laurent series of the Hamiltonian system with respect to the first variable 
is consistent with the Hamiltonian system with respect to the second variable.
\end{rem}
% }

%%%%%%%%%%%%%%%%%%%%%%%%%%%%%%%%%%%%%%%%%%%%%%%%%%%%%%%%%%%%%%%%%%%%%%
%%%%%%%%%%%%%%%%%%%%%%%%%%%%%%%%%%%%%%%%%%%%%%%%%%%%%%%%%%%%%%%%%%%%%%
%%%%%%%%%%%%%%%%%%%%%%%%%%%%%%%%%%%%%%%%%%%%%%%%%%%%%%%%%%%%%%%%%%%%%%

% \appendix

%%%%%%%%%%%%%%%%%%%%%%%%%%%%%%%%%%%%%%%%%%%%%%%%%%%%%%%%%%%%%%%%%%%%%%
%%%%%%%%%%%%%%%%%%%%%%%%%%%%%%%%%%%%%%%%%%%%%%%%%%%%%%%%%%%%%%%%%%%%%%
%%%%%%%%%%%%%%%%%%%%%%%%%%%%%%%%%%%%%%%%%%%%%%%%%%%%%%%%%%%%%%%%%%%%%%


\begin{thebibliography}{99}

    \bibitem{ARS80} M. J. Ablowitz, A. Ramani and H. Segur, 
    A connection between nonlinear evolution equations and ordinary differential equations of P-type. I., 
    {\it Journal of Mathematical Physics}, {\bf 21} (1980), 715 -- 721. 

    \bibitem{BY15} P. Boalch and D. Yamakawa, 
    Twisted wild character varieties, 
    arXiv:1512.08091 [math.AG]. 


    \bibitem{Chi15} H. Chiba, 
    Kovalevskaya exponents and the space of initial conditions of a quasi-homogeneous vector field, 
    {\it Journal of Differential Equations}, {\bf 259} (2015), 7681 -- 7716.
    
    \bibitem{Chi16} H. Chiba, 
    The first, second and fourth Painlev\'e equations on weighted projective spaces, 
    {\it Journal of Differential Equations}, {\bf 260} (2016), 1263--1313. 
    
    \bibitem{Chi24} H. Chiba, 
    Weights, Kovalevskaya exponents and the Painlev\'ee property, 
    {\it Annales de l'Institut Fourier}, {\bf 74} (2024), 811--848. 
    
    \bibitem{CM08} R. Conte, M. Musette, 
    {\it The Painlev\'e handbook}, 
    Springer, Dordrecht, 2008.

    \bibitem{DK14} B. Dubrovin and A. Kapaev, 
    On an isomonodromy deformation equation without the Painlev\'e property, 
    {\it Russian Journal of Mathematical Physics}, {\bf 21} (2014), 9--35.

    \bibitem{DK18} B. Dubrovin and A. Kapaev, 
    A Riemann--Hilbert Approach to the Heun Equation, 
    {\it SIGMA}, {\bf 14} (2018), 24 pages. 

    
    \bibitem{FIKN06} A. S. Fokas, A. R. Its, A. A. Kapaev and V. Y. Novokshenov, 
    \textit{Painlev\'e transcendents: the Riemann--Hilbert approach}, 
    Mathematical Surveys and Monographs~\textbf{128}, AMS, Providence, RI, 2006. 
		    
    \bibitem{HY00} J. Hu and M. Yan, 
    The Mirror Systems of Integrable Equations, 
    {\it Studies in Applied Mathematics}, {\bf 104} (2000), 67--90.  

    % \bibitem{HY13} J. Hu and M. Yan, 
    % Painleve Test and the Resolution of Singularities for Integrable Equations, 
    % arXiv:1304.7982 [math.CA].

    \bibitem{Ina13} M. Inaba,
    Moduli of parabolic connections on a curve and Riemann-Hilbert correspondence, 
    {\it J. Algebraic Geom.}, {\bf 22} (2013), 407--480.

    \bibitem{Inaba22} M. Inaba, 
    Moduli space of irregular singular parabolic connections of generic ramified type on a smooth projective curve, 
    {\it Asian Journal of Mathematics}, 
    {\bf 26} (2022), 1--36.

    \bibitem{Inaba23} M. Inaba, 
    Moduli Space of Factorized Ramified Connections and Generalized Isomonodromic Deformation, 
    {\it SIGMA}, {\bf 19} (2023), 013, 72 pages.


    
    \bibitem{IIS06a} M. Inaba, K. Iwasaki and M.-H. Saito, 
    Dynamics of the Sixth Painlev\'e Equation, 
    {\it S\'emi. Congr.}, {\bf 14} (2006), Soc. Math. France, 103--167.


    \bibitem{IIS06b} M. Inaba, K. Iwasaki and M.-H. Saito, 
    Moduli of Stable Parabolic Connections, Riemann-Hilbert correspondence and Geometry of Painlev\' e Equation of Type VI, I, 
    {\it Publ. Res. Inst. Math. Sci}., {\bf 42} (2006), 987--1089. 

    \bibitem{IIS06c} M. Inaba, K. Iwasaki and M.-H. Saito, 
    Moduli of Stable Parabolic Connections, Riemann-Hilbert Correspondence and Geometry of Painlev\'e Equation of Type VI, II, 
    {\it Adv. Stud. Pure Math.}, {\bf 45} (2006), 387--432.

    \bibitem{IS13}  M. Inaba and M.-H. Saito, 
    Moduli of unramified irregular singular parabolic connections on a smooth projective curve, 
    {\it Kyoto J. Math.}, {\bf 53} (2013), 433--482.

    
    \bibitem{IN86}  A. R. Its , V. Y. Novokshenov, 
    {\it The Isomonodromic Deformation Method in the Theory of Painlev\'e Equations}, 
    Lecture Notes in Mathematics, {\bf 1191}, 
    Springer-Verlag Berlin Heidelberg, 1986. 
    
    
    \bibitem{Iwa15} K. Iwaki, 
    On WKB Theoretic Transformations for Painlev\'e Transcendents 
    on Degenerate Stokes Segments, 
    {\it Publ. RIMS.}, {\bf 51} (2015), 1--57. 
    % DOI 10.4171/PRIMS/148
    
    \bibitem{Iwa19} K. Iwaki, 
     2-Parameter $\tau$-Function for the First Painlev\'e Equation: 
     Topological Recursion and Direct Monodromy Problem via Exact WKB Analysis, 
     {\it Commun. Math. Phys.}, {\bf 377} (2020), 1047--1098.    
    
    \bibitem{JMU81} M. Jimbo, T. Miwa and K. Ueno, 
    Monodromy preserving deformation of linear ordinary differential equations 
    with rational coefficients: I. General theory and $\tau$-function, 
    {\it Physica D: Nonlinear Phenomena}, {\bf 2} (1981), 306--352.
    
    \bibitem{JM81} M. Jimbo and T. Miwa, 
    Monodromy perserving deformation of linear ordinary differential equations 
    with rational coefficients. II, 
    {\it Physica D: Nonlinear Phenomena}, {\bf 2} (1981), 407--448. 

    
    \bibitem{KT98}  T. Kawai and Y. Takei, 
    {\it Algebraic Analysis of Singular Perturbation Theory}, 
    Transl. Math. Monogr. {\bf 227}, Amer. Math. Soc, 2005. 
    % \\ (河合隆裕・竹井義次, 特異摂動の代数解析学, 岩波書店, 1998.)



    \bibitem{Kaw17} H. Kawakami,
    Four-dimensional Painlev\'e-type equations associated with ramified linear equations III: 
    Garnier systems and Fuji-Suzuki systems, 
    {\it SIGMA}, {\bf 13} (2017), 096.


    \bibitem{KNS18} H. Kawakami, A. Nakamura and H. Sakai, 
    Degeneration scheme of 4-dimensional Painlev\'e-type equations,
    {\it MSJ Memoirs}, {\bf 37} (2018), 25--112.

    \bibitem{Kim89} H. Kimura,
    The degeneration of the two dimensional Garnier system and 
    the polynomial Hamiltonian structure,  
    {\it Annali di Matematica Pura ed Applicata}, {\bf 155} (1989), 25--74.
    
    \bibitem{LN21}  O. Lisovyy and A. Naidiuk, 
    Accessory parameters in confluent Heun equations and classical irregular conformal blocks, 
    {\it Letters in Mathematical Physics}, {\bf 111} (2021).

    \bibitem{Mal83a} B. Malgrange, 
    Sur les d\'eformations isomonodromiques, I: 
    singularit\'es r\'eguli\'eres, % in S\'eminaire ENS, 
    Mathematics and physics (Paris, 1979/1982),
    401426, {\it Progr. in Math.}, {\bf 37}, 
    Birkh\"auser-Verlag, Boston, 1983. 

    \bibitem{Mal83b} B. Malgrange, 
    Sur les d\'eformations isomonodromiques, II: 
    singularit\'es irr\'eguli\'eres, 
    Mathematics and physics (Paris, 1979/1982), 
    427438, {\it Progr. in Math.}, {\bf 37}, 
    Birkh\"auser-Verlag, Boston, 1983.

    \bibitem{Mas10} D. Masoero, 
    Poles of int\'egrale tritronqu\'ee and anharmonic oscillators. A WKB approach, 
    {\it Journal of Physics A: Mathematical and Theoretical}, {\bf 43} (2010), 095201.

    
    \bibitem{Miwa81} T. Miwa,
    Painleve Property of Monodromy Preserving Deformation Equations and the Analyticity of $\tau$ Functions, 
    {\it Publ. RIMS.}, {\bf 17} (1981), 703--721.
    
    \bibitem{Nak17} A. Nakamura,  
    Two aspects of the theta divisor associated with the autonomous Garnier system of type 9/2, 
    {\it Josai Mathematical Monographs}, {\bf 10} (2017), pp. 193--214. 


    \bibitem{Nak20} A. Nakamura,  
    The Painlev\'e divisors of the autonomous 4-dimensional Painlev\'e-type equations,  
    % (Mathematical structures of integrable systems and their applications), 
    {\it RIMS Kokyuroku Bessatsu}, {\bf 78} (2020), 29--51.

    \bibitem{Oka09} 
    K. Okamoto, {\it Painlev\'e Equations} (in Japanese), Iwanami, 2009.
    % \\ (岡本和夫, パンルヴェ方程式, 岩波書店, 2009.)

    \bibitem{Shimo00} S. Shimomura, 
    Painlev\'e property of a degenerate Garnier system of (9/2)-type and 
    of a certain fourth order non-linear ordinary differential equation, 
    {\it Annali della Scuola Normale Superiore di Pisa-Classe di Scienze}, 
    {\bf 29} (2000), 1--17.

    \bibitem{Shimo01} S. Shimomura,  
    Pole loci of solutions of a degenerate Garnier system, 
    {\it Nonlinearity}, {\bf 14} (2001), 193.
    
    
    \bibitem{Vor83}  A. Voros,  
    The return of the quartic oscillator. The complex WKB method, 
    {\it Annals of the IHP Theoretical Physics}, {\bf 39} (1983), 211--338.


\end{thebibliography}
\end{document}